\theoremstyle{definition}
\newtheorem{thm}{Theorem}[section]
\newtheorem{lem}[thm]{Lemma}
\newtheorem{cor}[thm]{Corollary}
\newtheorem*{conj}{Conjecture}
\newtheorem*{rem}{Remark}
\renewcommand{\O}{\mathcal{O}}
\newcommand{\p}{\mathfrak{p}}
\newcommand{\w}{\omega}
\renewcommand{\a}{\alpha}
\newcommand{\dia}[1]{\langle #1 \rangle}
\newcommand{\LegSym}[2]{\left(\frac{#1}{#2}\right)}
\newcommand{\logp}{\log_p}
\newcommand{\psim}{\psi_{m}}
\newcommand{\lcm}{\operatorname{lcm}}
\newcommand{\z}{\zeta_p^*}
\newcommand{\eps}{\varepsilon}
\newcommand{\Q}{\mathbb{Q}}
\newcommand{\Z}{\mathbb{Z}}
\newcommand{\C}{\mathbb{C}}
\newcommand{\F}{\mathcal{F}}
\renewcommand{\mod}[1]{\left(\bmod{\,#1}\right)}
\newcommand{\s}{\mathcal{S}}
\newcommand{\aac}{\text{Ankeny--Artin--Chowla conjecture}}
\begin{document}
\begin{center}
{\bf \uppercase{A note on arithmetic congruences}}
\vskip 20pt
{\textsc{Nic Fellini}}
\end{center}
\vskip 20pt

\noindent
{\small \textsc{Abstract.} 
By analyzing the coefficients of the power series defining the Kubota--Leopoldt $p$-adic $L$-function associated to the non-trivial character of a real quadratic field, we prove a congruence of Ankeny--Artin--Chowla-type for prime power modulus. Additionally, we show how some classical congruences relating Bernoulli numbers and Wilson quotients fit naturally into the theory of the $p$-adic Riemann zeta function. 
\vskip 8pt
\noindent{\small  \textsc{Keywords.}  $p$-adic $L$-functions, class numbers, congruences of special values of $L$-functions.}
\vskip 8pt
\noindent{\small \textsc{MSC 2020}: 11R11, 11R29, 	11R42 }

\section{Introduction}
The most widely known and implemented approach to public-key cryptography, the RSA scheme, is based on the simple philosophy that factoring large integers is computationally very challenging. Many of the most successful factoring algorithms (notably, the number field sieve \cite{Stevenhagen2008}) rely on moving the question of factorizations over the integers to one of factorizations over larger number systems, the so-called \textit{ring of integers} $\O_K$ of a number field $K$. A \textit{number field} is a finite degree extension of the rational numbers. Motivated by the relationship between integers and rational numbers, $\O_K$ serves as the generalization of the integers in the context of number fields. However, unlike the integers, $\O_K$ does not necessarily admit unique factorization of elements. The failure of $\O_K$ to be a unique factorization domain (UFD) is captured by the \textit{ideal class group} $\operatorname{Cl}(K)$ of $\O_K$, and in particular, by the cardinality or \textit{class number}, of this (finite) group. When the class number is one, the ring of integers is a principal ideal domain (PID) and hence a UFD. For a synoptic overview of modern factorization methods in cryptoanalysis we invite the reader to consult \cite{Boudot2022} and the references therein as well as the fantastic book \cite{Cohen1993} for many of the mathematical details. \newline 

Given their prominent place in algebraic number theory, the computation of ideal class groups and class numbers is of both practical and theoretical interest. For quadratic fields, there are excellent analytic methods for computing class numbers \cite{Louboutin2002}. Beside the analytic methods, there are also arithmetic methods that can be used to compute the class number for many quadratics fields \cite{Ankeny1952, Carlitz1953b, Fellini2025}. We describe results in this direction now. \newline

In 1951, N. C. Ankeny, E. Artin, and S. Chowla announced  four congruence relations involving the arithmetic invariants of real quadratic fields $\Q(\sqrt{d})$ and (generalized) Bernoulli numbers \cite{Ankeny1951}. Three of these relations were proved in their 1952  article \cite{Ankeny1952}  while the fourth was proved by L. Carlitz the following year \cite{Carlitz1953a}. To animate our discussion, we recall the most well-known of these relations.

\begin{thm}[Ankeny--Artin--Chowla, 1952]
    Let $p\equiv 1\mod{4}$ be a prime number and $\frac{1}{2}\left(t+u\sqrt{p} \right)$ be the fundamental unit of $\Q(\sqrt{p})$. Set $r=\frac{p-1}{2}$.  Then
    \begin{equation}\label{eq: AAC congruence}
         \frac{2hu}{t}\equiv -\frac{B_r}{r}\mod{p}
    \end{equation}
    where $h$ is the class number of $\Q(\sqrt{p})$ and $B_{r}$ is the $r$-th Bernoulli number defined by the relation
    \[
    \frac{x}{e^x-1} = \sum_{n=0}^\infty B_n\frac{x^n}{n!}.
    \]
\end{thm}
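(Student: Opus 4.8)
The plan is to route both sides of \eqref{eq: AAC congruence} through a single $p$-adic quantity: the value $L_p(1,\chi)$ of the Kubota--Leopoldt $p$-adic $L$-function attached to the non-trivial character $\chi=\LegSym{\cdot}{p}$ of $\Q(\sqrt p)$. Since $p\equiv 1\mod{4}$, this $\chi$ is an even primitive character of conductor $p$, and in terms of the Teichm\"uller character $\w$ it is exactly $\chi=\w^{r}$ with $r=\frac{p-1}{2}$. I would first record the two standing facts I will lean on: the fundamental unit $\eps=\frac12(t+u\sqrt p)$ has norm $-1$, so $t^2-pu^2=-4$ and in particular $t$ is a $p$-adic unit; and $\chi(p)=0$. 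The argument then amounts to computing $L_p(1,\chi)\bmod p$ in two different ways and equating.

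For the first computation I would invoke the $p$-adic (Leopoldt) class number formula, which in this rank-one case reads
\[
L_p(1,\chi)=\left(1-\frac{\chi(p)}{p}\right)\frac{2h\,\logp\eps}{\sqrt p}=\frac{2h\,\logp\eps}{\sqrt p},
\]
where $\logp$ is the Iwasawa logarithm on $\Q_p(\sqrt p)$ and $\logp\eps$ is the $p$-adic regulator. Writing $\eps=\frac t2\bigl(1+\frac ut\sqrt p\bigr)$ and expanding, the term $\logp\frac t2$ lies in $p\Z_p$ (as $t/2$ reduces to a root of unity mod $p$), while the series for $\logp\!\bigl(1+\frac ut\sqrt p\bigr)$ has leading term $\frac ut\sqrt p$ and tail of valuation $\ge 2$; hence $\logp\eps\equiv\frac ut\sqrt p\mod{p}$ in $\Q_p(\sqrt p)$. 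Dividing by $\sqrt p$ gives $L_p(1,\chi)\equiv\frac{2hu}{t}\mod{p}$.

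The second computation is the heart of the matter and is where I expect to spend the real effort. Here I would analyse the coefficients of the power series defining $L_p(s,\chi)$ directly, expanding about $s=1$. The point $s=1$ is not an interpolation node for $\chi=\w^r$ (the nodes $1-n$ require $n\equiv r\mod{p-1}$), so no appeal to continuity in $s$ is available and the value must be read off from the series. After discarding the higher Bernoulli contributions---each carries a factor $p^{j-1}$ and, via von Staudt--Clausen, stays $p$-integral, while the stray $B_1$-term collapses because $\sum_a\chi(a)a$ is divisible by $p$---one is left with
\[
L_p(1,\chi)\equiv-\frac1p\sum_{a=1}^{p-1}\chi(a)\logp a\equiv\sum_{a=1}^{p-1}\chi(a)\,q_p(a)\mod{p},
\]
using the elementary congruence $\frac1p\logp a\equiv -q_p(a)\mod{p}$ for the Fermat quotient $q_p(a)=\frac{a^{p-1}-1}{p}$. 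To finish I would write $\sum_a\chi(a)q_p(a)=\frac1p\sum_a\chi(a)a^{p-1}$, substitute $\chi(a)=\w(a)^r$ together with $\w(a)\equiv a^{p}\mod{p^2}$ to get $\sum_a\chi(a)a^{p-1}\equiv\sum_a a^{M}\mod{p^2}$ with $M=p(r+1)-1$, and evaluate this power sum modulo $p^2$ by Faulhaber's formula, where only the top term survives and yields $\sum_a a^M\equiv pB_M$. Dividing by $p$ gives $\sum_a\chi(a)q_p(a)\equiv B_M\mod{p}$, and the Kummer congruence finishes the job: since $M\equiv r\mod{p-1}$ we have $\frac{B_M}{M}\equiv\frac{B_r}{r}$, and because $M\equiv-1$ and $r\equiv-\frac12\mod{p}$ this rearranges to $B_M\equiv 2B_r\equiv-\frac{B_r}{r}\mod{p}$. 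Combining the two computations yields $\frac{2hu}{t}\equiv-\frac{B_r}{r}\mod{p}$.

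The main obstacle is squarely in the third paragraph: extracting $L_p(1,\chi)\bmod p$ from the series at a non-interpolation point, and then carrying a computation that is only valid modulo $p^2$ (the power sum, the expansion of $\logp$, the approximation $\w(a)\equiv a^p$) down to a clean congruence modulo $p$. Careful tracking of $p$-adic precision is essential, as is pinning down the sign in the $p$-adic class number formula and the branch of $\logp$ so that the final sign matches \eqref{eq: AAC congruence}; I would verify the sign on a small prime such as $p=5$ or $p=13$.
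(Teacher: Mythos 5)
Your proposal is correct in substance, but it takes a genuinely different route from the paper's in the decisive step. (Note that the paper itself states this theorem only as classical background; its own ``proof'' is the one implicit in its machinery, specialized to $d=p$, $\psim$ trivial, and reduced mod $p$.) The first half of your argument coincides with the paper's: both pass through $L_p(1,\chi)$ via the $p$-adic class number formula and the expansion of $\log_p\eps$. The paper's Lemma \ref{prop: p-adic log of fund. unit} does this more cleanly via the norm trick $\eps\bar\eps=\pm1$, which kills \emph{all} even powers of $\sqrt p$ exactly; your direct expansion leaves a tail, so that after dividing by $\sqrt p$ you only get a congruence modulo $p^{1/2}$ a priori, which must then be upgraded to modulo $p$ by observing that both sides lie in $\Q_p$ (an integer-valued valuation argument you should make explicit). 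The second half is where you diverge, and your framing contains a misconception: you claim that since $s=1$ is not an interpolation node, ``no appeal to continuity in $s$ is available.'' This is exactly backwards relative to the paper: its Theorem \ref{thm: power series} and Corollary \ref{cor: congruence}, resting on the coefficient bounds $|a_i|_p<1$ for $i\geq 1$ from \cite[Theorem 5.12]{Washington}, give $L_p(1,\chi)\equiv L_p(1-n,\chi)\pmod{p}$ for \emph{all} integers $n$. (Mere continuity would indeed not suffice, since $1$ and $1-r$ are not $p$-adically close, but the coefficient bound does.) The paper then simply evaluates at the node $s=1-r$, where $\chi\w^{-r}$ is trivial and interpolation hands over $-(1-p^{r-1})B_r/r\equiv -B_r/r\pmod{p}$, and is done. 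You instead evaluate the series at $s=1$ itself as a derivative computation, land on the Fermat-quotient sum $\sum_a\chi(a)q_p(a)$, and identify it with $-B_r/r$ via $\w(a)\equiv a^p\pmod{p^2}$, Faulhaber, and Kummer congruences. That chain is valid, and it is essentially the classical Ankeny--Artin--Chowla-style argument recast $p$-adically; what the paper's route buys is scalability, since the same coefficient analysis pushed one order deeper is precisely what yields the mod $p^2$ supercongruence of Theorem \ref{thm: thm 1}, whereas your power-sum/Kummer chain would become substantially messier at that precision.

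Two small slips, neither fatal. First, the stray $B_1$-term in the series involves $\sum_a\chi(a)a^{-1}$, not $\sum_a\chi(a)a$; both vanish mod $p$ because $\chi$ is even, so your conclusion stands, but the stated reason is off. Second, your intermediate claim $\sum_a a^{M}\equiv pB_M\pmod{p^2}$ requires the von Staudt--Clausen bookkeeping of the paper's Lemma \ref{lem: power sum for Bern} (note $M<p(p-1)$, so it applies) and at that stated precision can actually fail for $p=5$, where $B_{M-2}$ has $p$ in its denominator; however, you only use the consequence $\frac{1}{p}\sum_a a^{M}\equiv B_M\pmod{p}$, which is sound.
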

Philosophically, (\ref{eq: AAC congruence}) provides ``local" information about a critical value\footnote{In the context of this work, an integer $m$ is a \textit{critical point} of a Dirichlet $L$-function if neither $m$ nor $1-m$ is a pole of the $\Gamma$-factor of $L(s, \chi)$.  The value $L(m,\chi)$ is called a \textit{critical value} of $L(s,\chi)$.} of one $L$-function in terms of a critical value of another (suitably twisted) $L$-function. The left-hand side of (\ref{eq: AAC congruence}) should be viewed as a $p$-adic approximation of the  Dirichlet $L$-function \[L(s, \chi_p) = \sum_{n\geq 1} \frac{\chi_p(n)}{n^s}\] at $s=1$, where $\chi_p$ is the non-trivial character of $\Q(\sqrt{p})$. A standard application of the analytic class number formula (see, for example, Exercise 10.5.12 of \cite{MurtyProbsAlg}) links the value $L_p(1, \chi_p)$ to the interesting ``global" arithmetic data of $\Q(\sqrt{p})$ that we see on the left hand side of the congruence. The right hand side is the value of the Riemann zeta function at $1-r$. The congruence between these quantities is mediated by the analytic and interpolation properties of the Kubota--Leopoldt $p$-adic $L$-function associated to the quadratic character $\chi_p$. \newline

It was seemingly known to Ankeny, Artin, and Chowla that when $p\equiv 5\mod{8}$, the class number $h$ of $\Q(\sqrt{p})$ is strictly less than $p$. This fact existed for quite some time as part of the number theoretic folklore. Indeed, in their 1960 paper \cite{AC} Ankeny and Chowla remark that they were aware of this at the time of the 1952 paper with Artin. Using this inequality together with the elementary fact that $p\nmid t$, they computed for primes $p\equiv 5 \mod{8}$ less than $2000$, that $p\nmid u$. Perhaps influenced by these calculations, they posed the following question:
\newline 

\textbf{Question. }\textit{Given a prime $p\equiv 1 \mod{4}$ and fundamental unit $\eps = \frac{1}{2}\left( t+ u\sqrt{p} \right)$ of $\Q(\sqrt{p})$, does $p\nmid u$? }\newline 

Over time, the affirmative answer to this question, that $p\nmid u$, has been dubbed the \textit{Ankeny--Artin--Chowla} conjecture. It has garnered considerable and varied attention in the literature (see, for example: \cite{Agoh2016, aktas, Benmerieme2024, Cohen2020, Sheingorn1989, Walsh2025}). The significance of the $\aac$ arises from the \textit{arithmetic congruence} in (\ref{eq: AAC congruence}). Since the class number $h$ of the real quadratic field $\Q(\sqrt{p})$ is strictly less than $p$ for any odd prime $p$ \cite{AC, Le1994}, and $t$ is coprime to $p$, if $p\nmid u$ the congruence in (\ref{eq: AAC congruence}) gives an arithmetic way of computing the class number. Indeed, when $p\nmid u$, the reduced residue of $\left(\frac{t}{u}\right)B_{\frac{p-1}{2}}\mod{p}$ is \textit{precisely} the class number of $\Q(\sqrt{p})$. The subject of this work is to provide a method for computing the class number when $p$ divides $u$. \newline

A decade after Ankeny, Artin, and Chowla's work, L. J. Mordell \cite{Mordell1961} made a similar conjecture for primes $p\equiv 3\mod{4}$, but did not obtain any congruences involving the arithmetic invariants of $\Q(\sqrt{p})$. We will call this conjecture of Mordell along with the $\aac$ the \textit{prime Ankeny--Artin--Chowla and Mordell} conjecture, or AACM for short.  Recently, A. Reinhart \cite{Reinhart2024a, Reinhart2024b} has found counterexamples to the AACM conjecture for primes $p\equiv 1 \mod{4}$ and $p\equiv 3\mod{4}$. Before the discovery of these counterexamples, L. Washington argued heuristically that the number of counterexamples to the AACM conjecture less than $x$ should be asymptotically $\log\log x$ \cite{Washington}. Given this, we expect the collection of all counterexamples to form a very sparse set among the primes.  Later still, several authors investigated a composite version of the AACM conjecture \cite{Mollin1986, Stephens1988, Yu1998}. When there is no risk of confusion, we will refer to both the composite and prime AACM conjectures simply as the AACM conjecture.  On similar heuristic grounds, we would expect the number of counterexamples to the composite AACM conjecture less than $x$ to behave asymptotically like $O(\log x)$ \cite{Fellini2025}. In both the prime and composite case, the available numerical data align quite closely with the heuristics \cite{Reinhart2024a, Reinhart2024b}.   \newline 

Although, we expect both of these conjectures to be false infinitely often, we would heuristically expect the following weaker version of the AACM conjecture: 
\begin{conj}[The weak AACM conjecture]
    Suppose $d>0$ is a square-free integer and write $d=pm$ for an odd prime $p$ and integer $m$. If $d\equiv 1\mod 4$, set $\delta =1$ otherwise let $\delta= 2$.  Let $\eps = \frac{\delta}{2}\left(t + u\sqrt{d}\right)$ be the fundamental unit of $\Q(\sqrt{d})$. Then there exists an integer $\kappa>1$, independent of $d$, such that $d^\kappa \nmid u$.
\end{conj}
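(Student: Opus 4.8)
The plan is to trade the divisibility statement for a bound on a $p$-adic valuation, and then to read that valuation off the $p$-adic $L$-function. First, since $p^{\kappa}\mid d^{\kappa}$, the hypothesis $d^{\kappa}\mid u$ forces $p^{\kappa}\mid u$; contrapositively, $p^{\kappa}\nmid u$ already yields $d^{\kappa}\nmid u$. Thus it suffices to exhibit a constant $\kappa>1$, independent of $d$, with $v_p(u)<\kappa$ for some prime $p\mid d$ --- and in the decisive case $d=p$ prime there is only the single prime to work with. This recasts the conjecture as a uniform upper bound on the $p$-adic valuation of $u$.

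Next I would make $v_p(u)$ explicit. Writing $\eps'$ for the Galois conjugate and using $N(\eps)=\eps\eps'=\pm1$, one has $\logp(\eps)=\tfrac12\logp(\eps/\eps')$. Because $d$ is square-free and $p\mid d$, the prime $p$ ramifies in $\Q(\sqrt d)$, so in the completion $\Q_p(\sqrt d)$ we have $v_{\p}(\sqrt d)=\tfrac12$, while $t$ is forced to be a $p$-adic unit whenever $p\mid u$ by the Pell-type relation $t^{2}-du^{2}=\pm4/\delta^{2}$. A short computation gives $\eps/\eps'=1+y$ with $v_{\p}(y)=v_p(u)+\tfrac12$, hence $v_{\p}(\logp\eps)=v_p(u)+\tfrac12$. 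Substituting into Leopoldt's $p$-adic class number formula --- whose Euler factor at $p$ equals $1$ precisely because $p$ ramifies, so that for some $p$-adic unit $c$ one has $L_p(1,\chi_d)=c\,h\,\logp(\eps)/\sqrt d$ --- yields the clean identity $v_p\bigl(L_p(1,\chi_d)\bigr)=v_p(h)+v_p(u)$. In particular $v_p(u)\le v_p\bigl(L_p(1,\chi_d)\bigr)$, and for $d=p$ (where $h<p$ forces $v_p(h)=0$) this is an equality $v_p(u)=v_p(L_p(1,\chi_p))$, so that the Ankeny--Artin--Chowla conjecture for $p$ is exactly the assertion that this common valuation vanishes.

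It remains to bound $v_p(L_p(1,\chi_d))$ above, uniformly in $d$. Here I would use the paper's central object: $L_p(s,\chi_d)$ is the evaluation of an Iwasawa power series $f(T,\chi_d)\in\Z_p[[T]]$ at the point $T_1=(1+p)^{s}-1$ with $s=1$, i.e.\ $T_1=p$, and the same coefficient analysis that yields our prime-power congruence makes this valuation computable modulo any $p^{n}$. By Weierstrass preparation $f=p^{\mu}P(T)U(T)$ with $U$ a unit and $P$ distinguished of degree $\lambda=\lambda_p(\chi_d)$; the Ferrero--Washington theorem gives $\mu=0$ since $\chi_d$ is abelian, and as $T_1=p$ lies in the maximal ideal, $U(T_1)$ is a unit, so $v_p(L_p(1,\chi_d))=v_p\bigl(P(T_1)\bigr)$.

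The main obstacle is exactly this last quantity: $v_p\bigl(P(T_1)\bigr)=\sum_i v_p(T_1-\rho_i)$ over the zeros $\rho_i$ of $P$, so a uniform bound requires controlling both the degree $\lambda$ and the $p$-adic proximity of $s=1$ to a zero of $L_p(\,\cdot\,,\chi_d)$, uniformly over all square-free $d$. No such unconditional bound is presently available: it is entangled with the size of Iwasawa $\lambda$-invariants and with Greenberg-type conjectures for real quadratic fields, and a sequence $d_i$ along which $v_{p_i}(L_{p_i}(1,\chi_{d_i}))\to\infty$ would manufacture counterexamples of unbounded valuation. The heuristic count of such valuations --- weighting the event $v_p(u)\ge k$ by roughly $p^{-k}$, whose total over primes is finite for $k\ge2$ and tends to $0$ as $k\to\infty$ --- is what makes a universal $\kappa$ plausible, but converting this heuristic into an unconditional bound is the step I expect to be genuinely out of reach. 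Conditional on a uniform bound $B$ for $v_p(L_p(1,\chi_d))$, however, the reduction above proves the weak AACM conjecture with any integer $\kappa>B$.
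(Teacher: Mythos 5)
You have attempted to prove a statement that the paper itself does not prove and cannot prove: this is stated as a \emph{conjecture}, supported in the paper only by Washington-style heuristics (the expected $\log\log x$, resp.\ $O(\log x)$, count of counterexamples to the prime and composite AACM conjectures) and by the numerical data in Table 1, where no known $d$ with $d\mid u$ satisfies $d^2\mid u$. So there is no paper proof to compare against, and your proposal --- by your own admission in the final paragraph --- is not a proof either: it is a reduction of the conjecture to an open problem. The gap is exactly where you place it: after reducing to the claim $v_p(u)<\kappa$ and converting this, via the ramified Euler factor and the $p$-adic class number formula, to $v_p\bigl(L_p(1,\chi_D)\bigr)$, you need a bound on $v_p\bigl(P(T_1)\bigr)$ that is uniform over all square-free $d$, i.e.\ uniform control of both the $\lambda$-invariant and the $p$-adic distance from $s=1$ to the zeros of the Iwasawa power series. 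No such unconditional bound exists in the literature, and nothing in this paper's coefficient analysis supplies one: the paper's machinery (Theorem \ref{thm: power series}, Lemma \ref{lem: a_1 non-princ}) pins down $L_p(1,\chi_D)$ modulo $p^2$, which can \emph{detect} whether $v_p(u)\le 1$ in a given case but cannot rule out large valuations uniformly in $d$.

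That said, the reductions you do carry out are sound and consonant with the paper's framework. The step $d^\kappa\mid u\Rightarrow p^\kappa\mid u$ is trivially correct; the valuation computation $v_p\bigl(\log_p(\eps)/\sqrt d\bigr)=v_p(u)$ follows at once from Lemma \ref{prop: p-adic log of fund. unit} (your route through $\eps/\eps'$ reproves it, with a small caveat at $p=3$ where $v_\p(y)=1/(p-1)$ and the leading-term argument for the logarithm needs care --- but the paper excludes $p=3$ anyway); the identity $v_p(L_p(1,\chi_D))=v_p(h)+v_p(u)$, and the resulting equivalence for $d=p$ prime between the AAC conjecture and $v_p(L_p(1,\chi_p))=0$, is a clean formalization of the paper's remark that the left side of (\ref{eq: AAC congruence}) is a $p$-adic avatar of $L(1,\chi_p)$. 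One minor slip: $t$ is a $p$-adic unit unconditionally (from $t^2-du^2=\pm4/\delta^2$ and $p\mid d$), not merely ``whenever $p\mid u$.'' The honest verdict is that your proposal correctly identifies why the conjecture is plausible and where its difficulty is concentrated, but proves it only conditionally on a uniform bound $B$ for $v_p(L_p(1,\chi_d))$ --- a hypothesis that is, at present, no more accessible than the conjecture itself.
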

Among the 23 known real quadratic fields for which $d\mid u$,  there are none for which $d^2\mid u$. However, there are three instances where an odd prime divisor $p$ of $d$ divides $u$ to order at least 2.  These are summarized in Table 1. \pagebreak
\begin{table}[h!] \label{tab: table 1}
    \centering
    \begin{tabular}{c c c r} \toprule 
     $d$  & Factorization & Class number of $\Q(\sqrt{d})$ & $v_p(u)$   \\
    \midrule
     $4099215$& $3\cdot 5\cdot 273281 $ & $4$ & $v_3(u)=3$ \\
     $125854178626$ & $2\cdot 11\cdot 17 \cdot 336508499$ & $8$ & $v_{11}(u)=2$\\
      $20256129307923$ & $3\cdot 569 \cdot 2659 \cdot 4462771$ & $16$ & $v_3(u)=2$  \\
      \bottomrule
    \end{tabular}
    \caption{The last column contains the $p$-adic valuation of $u$ for the prime divisors $p$ of $d$ that occur to at least the second power. }
\end{table} 

In retrospect, many essential ideas in the theory of cyclotomic fields, Iwasawa theory, and more generally, $p$-adic analytic number theory, can be found in the work of Ankeny, Artin, and Chowla. In particular, their research laid some of the early groundwork towards the development of the $p$-adic logarithm (see \cite{Fellini2024} for a relevant discussion of this development) and hints at the larger theory of $p$-adic $L$-functions as developed by T. Kubota and H.-W Leopoldt \cite{Kubota1964}, and K. Iwasawa \cite{Iwasawa1969}. \newline

The present work establishes a supercongruence of Ankeny--Artin--Chowla type, i.e., a congruence involving the arithmetic invariants of $\Q(\sqrt{d})$ and certain linear combinations of Bernoulli numbers $\mod{p^2}$. Under the weak AACM conjecture for $\kappa =2$, this yields an arithmetic method for computing the class number of many real quadratic fields. Notably, it can be used to compute the class number of all of the known counterexamples to the AACM conjecture.  We remark that while several previous attempts have been made to obtain such supercongruences---most notably by S. Jakubec in collaboration with M. La\v s\v s\'ak and F. Marko \cite{Jakubec1996, Jakubec1998a, Jakubec1998b, Jakubec2013}---the existing results impose significant restrictions on the primes considered.  We overcome these restrictions using methodology introduced by L. Washington \cite{Washington} and further developed by the author in \cite{Fellini2025}. However, we restrict our attention to the case that $p>3$. In the case of square-free $d$ of the form $d=3m$, reflection theorems (such as that of A. Scholz \cite{Scholz1932}) provide significantly stronger results on the 3-rank of the ideal class group of $\Q(\sqrt{3m})$ in terms of the $3$-rank of $\Q(\sqrt{-m})$. We refer the interested reader to chapter 10.2 of L. Washington's book \cite{Washington} or the excellent papers \cite{Lemmermeyer2005} and \cite{Ellenberg2007} for more on reflection theorems and their applications.

\section{Statement of results}
Let $\chi$ be a Dirichlet character of conductor $f$ and set
    \[
        F_{\chi}(t) = \sum_{a=1}^f \frac{\chi(a)te^{at}}{e^{ft}-1}.
    \]
Expanding $F_{\chi}(t)$ as formal power series we have:
\[
    F_{\chi}(t) = \sum_{n=0}^\infty B_{n, \chi} \frac{t^n}{n!}.
\]
We call the coefficients $B_{n, \chi}$ of $F_{\chi}(t)$ \textit{generalized Bernoulli numbers}.
\newline

Let $p>3$ be an odd prime and write $d=pm$ for a square-free positive integer $m$ coprime to $p$. If $d\equiv 1\mod{4}$ we set $\delta =1$ and if $d\equiv 2, 3\mod{4}$ we set $\delta =2$. Let $D = \delta^2d$ be the discriminant of the real quadratic field $\Q(\sqrt{d})$, $\chi_D(\cdot)$ denote the non-trivial character of $\Q(\sqrt{d})$ of conductor $D$. Finally, we define the primitive quadratic character $\psim(\cdot)$ of conductor $\delta^2m$ by  $\chi_D(\cdot) = \LegSym{\cdot }{p}\psim(\cdot)$ where $\LegSym{\cdot}{p}$ is the Legendre symbol $\mod{p}$.

\begin{thm}\label{thm: thm 1}
    Suppose $p>3$ is prime and write $d=pm$ for $m$ square-free and coprime to $p$. Further suppose that $d>5$. Denote the class number and fundamental unit of $\Q(\sqrt{d})$ by $h$ and $\eps = \frac{\delta}{2}\left(t+ u\sqrt{d} \right)$, respectively. Let $r=\frac{p-1}{2}$. Then, 
    \[
      \frac{4h}{\delta} \left( \frac{u}{t} + \frac{d}{3}\left( \frac{u}{t} \right)^3\right) \equiv -3(1-\psim(p)p^{r-1})\frac{B_{r, \psim}}{r} +\frac{B_{3r, \psim}}{3r} \mod{p^2}. 
    \]
    
\end{thm}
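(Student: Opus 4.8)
The plan is to translate both sides of the asserted congruence into special values of the Kubota--Leopoldt $p$-adic $L$-function $L_p(s,\chi_D)$ and then to exploit its $p$-adic analyticity. Starting with the right-hand side: since $\chi_D=\LegSym{\cdot}{p}\psim$ and the Legendre symbol is $\w^{r}$ (the quadratic character $\mod{p}$, where $\w$ denotes the Teichm\"uller character), the twist $\chi_D\w^{-n}$ equals $\w^{r-n}\psim$. For $n=r$ this is $\psim$, and for $n=3r$ it is $\w^{-2r}\psim=\psim$ because $\w^{2r}=\w^{p-1}=1$. The interpolation formula $L_p(1-n,\chi_D)=-\bigl(1-(\chi_D\w^{-n})(p)p^{n-1}\bigr)B_{n,\chi_D\w^{-n}}/n$ then yields $L_p(1-r,\chi_D)=-(1-\psim(p)p^{r-1})B_{r,\psim}/r$ and $L_p(1-3r,\chi_D)=-(1-\psim(p)p^{3r-1})B_{3r,\psim}/(3r)$. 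As $3r-1\ge 2$ for $p>3$, the factor $p^{3r-1}$ vanishes $\mod{p^2}$, so the right-hand side is congruent to $3L_p(1-r,\chi_D)-L_p(1-3r,\chi_D)\mod{p^2}$.

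Next I would rewrite the left-hand side. Because $p\mid D$, the Euler factor at $p$ is trivial and the $p$-adic class number formula reads $L_p(1,\chi_D)=2h\logp\eps/\sqrt D$. Writing $\eps=\tfrac{\delta}{2}(t+u\sqrt d)$ and using $\eps\bar\eps=\pm1$ together with $\logp(\pm1)=0$, one obtains $\logp\eps=\operatorname{arctanh}(u\sqrt d/t)$, hence
\[
\frac{\logp\eps}{\sqrt d}=\sum_{k\ge 0}\frac{d^{k}}{2k+1}\Big(\frac{u}{t}\Big)^{2k+1},
\]
which converges since $|u\sqrt d/t|_p\le p^{-1/2}<1$. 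Multiplying by $4h/\delta$ turns the full series into $2L_p(1,\chi_D)$, and the displayed left-hand side is exactly its truncation after the terms $k=0,1$. To justify the truncation $\mod{p^2}$ I must show the tail $\tfrac{4h}{\delta}\sum_{k\ge 2}\tfrac{d^{k}}{2k+1}(u/t)^{2k+1}$ vanishes: each summand has $v_p\ge k-v_p(2k+1)+(2k+1)v_p(u)$, and the binding case is $k=r$, where $2k+1=p$. This is $\ge 2$ once $p\ge 7$, and also whenever $p\mid u$; consequently the left-hand side is congruent to $2L_p(1,\chi_D)\mod{p^2}$.

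It then remains to prove the $L$-value congruence $2L_p(1,\chi_D)\equiv 3L_p(1-r,\chi_D)-L_p(1-3r,\chi_D)\mod{p^2}$, and this is where the analysis of the power-series coefficients enters. Using the Iwasawa integral representation of $L_p(s,\chi_D)$ as an integral of $\dia{x}^{1-s}$ against a $p$-adic measure $\mu_{\chi_D}$ on $\Z_p^{\times}$, I would expand $\dia{x}^{1-s}=\exp\!\bigl((1-s)\logp\dia{x}\bigr)$ to write $L_p(s,\chi_D)=\sum_{n\ge 0}a_n(s-1)^n$ with $a_n=\tfrac{(-1)^n}{n!}\int_{\Z_p^{\times}}(\logp\dia{x})^n\,d\mu_{\chi_D}(x)$. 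Since $\dia{x}\in 1+p\Z_p$ gives $\logp\dia{x}\in p\Z_p$, one gets $v_p(a_n)\ge n-v_p(n!)\ge 2$ for all $n\ge 2$ and $p>3$; that is, $L_p(s,\chi_D)$ is affine in $s$ modulo $p^2$. The weights $(2,-3,1)$ attached to $s=1,\,1-r,\,1-3r$ satisfy $2-3+1=0$ and $2\cdot 0-3(-r)+(-3r)=0$, so they annihilate the constant and linear parts, leaving $2L_p(1,\chi_D)-3L_p(1-r,\chi_D)+L_p(1-3r,\chi_D)=\sum_{n\ge 2}a_n(-r)^n(3^n-3)\equiv 0\mod{p^2}$. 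Combining the three reductions gives the theorem.

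I expect the coefficient estimate $v_p(a_n)\ge 2$ to be the crux: making it rigorous requires pinning down the precise measure (equivalently, the Iwasawa power series) defining $L_p(s,\chi_D)$ and controlling the denominator introduced by the regularization factor, so that the clean bound survives without loss. The second delicate point is the valuation bookkeeping for the arctanh tail, where the term $k=r$ with $p\mid 2k+1$ is the tightest; this is precisely where the hypothesis $p>3$ and the regime $p\mid u$ of interest do their work.
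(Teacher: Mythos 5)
Your proposal is correct in outline, and it reaches the theorem by a genuinely different route through the same underlying object. Both you and the paper convert the left side into $2L_p(1,\chi_D)$ via the $p$-adic class number formula and the $\operatorname{arctanh}$ series for $\logp(\eps)/\sqrt{d}$, and both rest on the fact that $L_p(1-s,\chi_D)\equiv a_0+a_1s \mod{p^2}$ for $s\in\Z_p$. The difference is what happens next: the paper uses only the two points $s=0$ and $s=r$, so it must compute $a_1\mod{p^2}$ explicitly, and that computation (its Lemma 6.1) is the technical heart of the paper---Washington's explicit formula expanded via Stirling numbers, reduction to character power sums, Carlitz's integrality theorem, and Z.-H. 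Sun's congruence for $B_{k(p-1)+b,\chi}/(k(p-1)+b)\mod{p^2}$. You instead take the three points $s=0,r,3r$ with weights $(2,-3,1)$, which annihilate both $a_0$ and $a_1$; this is legitimate because $3r\equiv r\mod{p-1}$, so $\chi_D\w^{-3r}=\chi_D\w^{-r}=\psim$ and the interpolation value at $3r$ again involves $\psim$-Bernoulli numbers (with Euler factor $p^{3r-1}\equiv 0\mod{p^2}$). Your trick in fact re-derives the paper's Lemma 6.1: the affine approximation at $s=r,3r$ plus interpolation gives $a_1\equiv\bigl(L_p(1-3r,\chi_D)-L_p(1-r,\chi_D)\bigr)/(2r)\mod{p^2}$, which is exactly that lemma, so the special case of Sun's congruence the paper invokes comes out for free rather than being consumed as input. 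What the paper's longer route buys is the explicit $a_1$, which it reuses for the principal character to prove the Wilson-quotient theorems; what your route buys is brevity and a transparent reason why precisely the combination of $3B_{r,\psim}$ and $B_{3r,\psim}$ appears. The analytic input you still need---$v_p(a_n)\geq 2$ for $n\geq 2$---is exactly the paper's power-series theorem, proved there by elementary expansion of Washington's sum rather than by Iwasawa measures; your measure-theoretic route also works, provided you use integrality of the Iwasawa power series for the nontrivial quadratic $\chi_D$ (or choose the regularizing $c$ with $\chi_D(c)=-1$) so the regularization factor does not spoil the bound, as you yourself flag.

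One caveat on the truncation step: your valuation bound for the $\operatorname{arctanh}$ tail genuinely covers only $p\geq 7$, or $p=5$ with $5\mid u$, yet the theorem also claims $p=5$, $d>5$, $5\nmid u$. There the $k=r=2$ term $\frac{4h}{\delta}\cdot\frac{d^2}{5}\left(\frac{u}{t}\right)^5$ has $5$-adic valuation $1$ unless $5\mid hu$, so ``consequently'' is a leap in that one case. You should not feel bad about this: the paper's own proof truncates the series after two terms with no case analysis at all, so it is silent on the identical point; if anything, your bookkeeping has surfaced a step the paper should have justified rather than introduced a gap of your own.
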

We remark that Theorem \ref{thm: thm 1} remains true if one considers $d=5$. However, there are extra terms that arise in this case (see Section \ref{sec: congruences}). Additionally,  the methods involved in the proof of this theorem easily extend to obtain congruences $\mod{p^3}$ if one is willing to exclude small primes (say $p=3,5, 7$). \newline

If $p$ violates the AACM conjecture, but satisfies the weak AACM  conjecture with $\kappa =2$, we obtain
\begin{cor}
Suppose that $p>5$ is an odd prime and that $p$ exactly divides $u$.  Then, 
    \[
    \frac{2h}{\delta} \left(\frac{u}{pt}\right) \equiv \frac{1}{p}\left( 3B_{r, \psi_m} - \frac{B_{3r, \psim}}{3}\right) \mod{p}. 
    \]
\end{cor}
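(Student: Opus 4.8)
The plan is to deduce the corollary directly from Theorem~\ref{thm: thm 1} by exploiting the hypothesis that $p$ \emph{exactly} divides $u$, i.e. $v_p(u)=1$. First I would observe that since $t$ is coprime to $p$, the quantity $u/t$ has $p$-adic valuation exactly $1$, so I may write $u/t = p\cdot(u/(pt))$ where $u/(pt)$ is a $p$-adic unit. Substituting this into the left-hand side of the congruence in Theorem~\ref{thm: thm 1}, the cubic term $\frac{d}{3}(u/t)^3$ acquires a factor of $p^3$ and hence vanishes modulo $p^2$; this is the key simplification. Thus modulo $p^2$ the entire left-hand side collapses to $\frac{4h}{\delta}\cdot\frac{u}{t} = \frac{4h}{\delta}\cdot p\cdot\frac{u}{pt}$.

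Next I would turn to the right-hand side. The factor $\psim(p)p^{r-1}$ appears in the term $-3(1-\psim(p)p^{r-1})\frac{B_{r,\psim}}{r}$; since $r-1 = \frac{p-3}{2}\geq 2$ for $p>5$, this exponent is at least $2$, so $p^{r-1}\equiv 0 \mod{p^2}$ and the correction term drops out. Hence modulo $p^2$ the right-hand side simplifies to $-3\frac{B_{r,\psim}}{r} + \frac{B_{3r,\psim}}{3r}$. Writing $r=\frac{p-1}{2}$ and clearing the $1/r$, I would multiply through so that $\frac{B_{r,\psim}}{r}$ becomes $\frac{2B_{r,\psim}}{p-1}\equiv -2B_{r,\psim}\mod p$ after accounting for the reduction, and similarly handle $\frac{B_{3r,\psim}}{3r}$; the goal is to recognize the right side as $p$ times the expression $\frac{1}{p}\bigl(3B_{r,\psim} - \frac{B_{3r,\psim}}{3}\bigr)$ appearing in the corollary.

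The crucial structural point is that both sides of the reduced congruence are divisible by $p$: the left side manifestly carries a factor $p$ from $u$, and the right side must also be divisible by $p$ as a consequence of the theorem (this is essentially the statement that the leading Ankeny--Artin--Chowla congruence modulo $p$ holds, forcing $3B_{r,\psim}-\frac{B_{3r,\psim}}{3}\equiv 0\mod p$ when $p\mid u$). Given this, I would divide the entire congruence modulo $p^2$ by $p$ to descend to a congruence modulo $p$, yielding $\frac{4h}{\delta}\cdot\frac{u}{pt} \equiv \frac{1}{p}\bigl(\,\cdots\,\bigr)\mod p$, and then absorb the remaining numerical factor of $2$ (from the relation between $r$ and $\frac{p-1}{2}$ together with the coefficient $4$ versus $2$) to arrive at the stated form $\frac{2h}{\delta}\bigl(\frac{u}{pt}\bigr)$.

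The main obstacle I anticipate is bookkeeping the constant factors and the $1/r$ denominators carefully when dividing by $p$: one must verify that $\frac{1}{p}\bigl(3B_{r,\psim}-\frac{B_{3r,\psim}}{3}\bigr)$ is genuinely a well-defined $p$-adic integer, which requires knowing that the combination $3B_{r,\psim}-\frac{B_{3r,\psim}}{3}$ vanishes modulo $p$ precisely when $p\mid u$. This divisibility is exactly what Theorem~\ref{thm: thm 1} provides once reduced modulo $p$, so the argument is self-contained; the only care needed is to track how $r=\frac{p-1}{2}\equiv -\tfrac12\mod p$ interacts with the Bernoulli coefficients so that the factor of $2$ and the factor of $4/\delta$ versus $2/\delta$ reconcile correctly.
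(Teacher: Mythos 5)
Your proposal is correct and is essentially the paper's own (implicit) argument: the corollary is stated as an immediate consequence of Theorem \ref{thm: thm 1}, obtained exactly as you describe by discarding the cubic term and the Euler factor $\psim(p)p^{r-1}$ modulo $p^2$, noting that $p \mid u$ forces $3B_{r,\psim} - \tfrac{1}{3}B_{3r,\psim} \equiv 0 \mod{p}$, dividing the congruence by $p$, and using $1/r \equiv -2 \mod{p}$ to reconcile the constants. Your bookkeeping of the factors of $2$ and of the $p$-integrality of $\tfrac{1}{p}\left(3B_{r,\psim} - \tfrac{1}{3}B_{3r,\psim}\right)$ is exactly what is needed.
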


We will call a prime $p$ a \textit{super-AACM prime for $d$} if $p\mid d$ and $p^m\mid u$ for some $m\geq 2$.
\begin{cor}
    Suppose $p>5$ is a super-AACM prime for a positive square-free integer $d$. Then 
    \[
    9B_{r, \psim} \equiv B_{3r, \psim} \mod{p^2}.
    \]
\end{cor}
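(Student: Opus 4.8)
The plan is to read this off from Theorem~\ref{thm: thm 1} by substituting the super-AACM hypothesis and tracking $p$-adic valuations. By definition $p\mid d$ and $p^2\mid u$; since $d=pm$ with $p\nmid m$ we have $v_p(d)=1$, and since $t$ is coprime to $p$ it is a unit in $\Z_p$, so $u/t\in\Z_p$ with $v_p(u/t)=v_p(u)\geq 2$. First I would show the left-hand side of Theorem~\ref{thm: thm 1} vanishes modulo $p^2$: the linear term is divisible by $p^2$ because $u/t\equiv 0\mod{p^2}$, while the cubic term $\tfrac{d}{3}(u/t)^3$ has $p$-adic valuation at least $1+3\cdot 2=7$; as $\tfrac{4h}{\delta}\in\Z$ is a $p$-adic integer, the whole left side is $\equiv 0\mod{p^2}$. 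Hence Theorem~\ref{thm: thm 1} collapses to
\[
-3\bigl(1-\psim(p)p^{r-1}\bigr)\frac{B_{r,\psim}}{r} + \frac{B_{3r,\psim}}{3r} \equiv 0 \mod{p^2}.
\]

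Next I would discard the Euler factor. This is the one place the hypothesis $p>5$ is essential: it forces $r-1=\tfrac{p-3}{2}\geq 2$, so $p^{r-1}\equiv 0\mod{p^2}$ and therefore $1-\psim(p)p^{r-1}\equiv 1\mod{p^2}$. Because $\tfrac{B_{r,\psim}}{r}$ is $p$-integral (as $(p-1)\nmid r$; up to the Euler factor it is essentially a value of the Kubota--Leopoldt $p$-adic $L$-function attached to $\psim$, which takes values in $\Z_p$), the change introduced by replacing the Euler factor with $1$ has valuation at least $r-1\geq 2$. This leaves
\[
-3\frac{B_{r,\psim}}{r} + \frac{B_{3r,\psim}}{3r} \equiv 0 \mod{p^2}.
\]
Finally I would clear denominators: since $3r=\tfrac{3(p-1)}{2}$ is coprime to $p$, it is a unit modulo $p^2$, and multiplying the previous congruence through by $3r$---which turns the left side into the exact integer combination $-9B_{r,\psim}+B_{3r,\psim}$---preserves the modulus and yields $9B_{r,\psim}\equiv B_{3r,\psim}\mod{p^2}$, as claimed.

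There is no serious analytic obstacle here, as the statement is a formal specialization of the main theorem; the only genuine care required is the valuation bookkeeping. The points to verify are that $p^2\mid u$ really does annihilate the cubic term (so that $1+3v_p(u/t)$ exceeds $2$), that the Euler factor and the denominators $r,3r$ are respectively negligible and invertible modulo $p^2$, and, crucially, that one needs $p>5$ rather than merely $p\geq 5$ so that $p^{r-1}$ vanishes to second order; for $p=5$ one would have $p^{r-1}=5$, the Euler factor would survive, and the clean conclusion would fail.
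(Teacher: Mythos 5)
Your proof is correct and is essentially the paper's own route: the paper states this corollary with no separate proof, as an immediate specialization of Theorem \ref{thm: thm 1}, which is exactly what you carry out. Your three bookkeeping points---the left side vanishes modulo $p^2$ because $p^2\mid u$ and $p\nmid t$, the Euler factor satisfies $1-\psim(p)p^{r-1}\equiv 1 \mod{p^2}$ precisely because $p>5$ forces $r-1\geq 2$ (matching the paper's remark following Lemma \ref{lem: a_1 non-princ}), and $3r$ is a $p$-adic unit so the denominators clear---are precisely the intended details.
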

In particular, the contrapositive of this statement gives a criteria to determine if a prime $p$ is a super-AACM prime for $d$. \newline

We also use this as an opportunity to show how a classical result of  N. G. W. H. Beeger \cite{Beeger1913} and independently E. Lehmer \cite{Lehmer1938}, relating the Bernoulli number $B_{p-1}$ to the \textit{Wilson quotient} $W_p=\frac{(p-1)!+1}{p}$, fits naturally into the theory of the $p$-adic Riemann zeta function. In fact, one can view their result as an analogue to Theorem 4 of \cite{Ankeny1951} and Theorem 1.5 of \cite{Fellini2025} for the ``trivial" quadratic field, $\Q$.  We recall Lehmer's more general result here.   
\begin{thm}[Lehmer, 1938] \label{thm: thm 2}
    Let $p\geq 3$ and $k$ be any positive integer. Then, 
    \[
    B_{k(p-1)} +\frac{1}{p} -1\equiv  kW_p \mod{p}.
    \]    
\end{thm}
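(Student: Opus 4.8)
The plan is to deduce the theorem from a single congruence modulo $p^2$ and to read that congruence off the $p$-adic Riemann zeta function. Since $k(p-1)$ is even and divisible by $p-1$, the von Staudt--Clausen theorem gives $B_{k(p-1)} + \tfrac{1}{p} \in \Z_p$ (the remaining fractions $\tfrac{1}{q}$ with $(q-1)\mid k(p-1)$ and $q\neq p$ are $p$-adic units), so $B_{k(p-1)} + \tfrac{1}{p} - 1$ is a genuine element of $\Z_p$ and the asserted congruence makes sense. Multiplying by $p$, the statement is equivalent to
\[
    p\,B_{k(p-1)} \equiv (p-1) + k\,p\,W_p \mod{p^2}.
\]
The left-hand side is exactly the special value seen by the $p$-adic zeta function: the interpolation property $\zeta_p(1-n) = -(1-p^{n-1})\tfrac{B_n}{n}$ for $(p-1)\mid n$, evaluated at $n=k(p-1)$, exhibits $B_{k(p-1)}$ as the value on the trivial-character branch, the Euler factor $1-p^{k(p-1)-1}$ being $p$-adically negligible for $p>3$. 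This is the ``trivial quadratic field'' analogue of the interpolation step carried out for $\chi_D$ elsewhere in the paper.

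The arithmetic input comes from evaluating the power sum $\Sigma = \sum_{a=1}^{p-1} a^{k(p-1)}$ modulo $p^2$ in two ways. On one hand, expanding $\Sigma$ through the Bernoulli (Faulhaber) summation formula, in which only the top term survives modulo $p^2$, gives $\Sigma \equiv p\,B_{k(p-1)} \mod{p^2}$; this is precisely the power-series coefficient through which $B_{k(p-1)}$ enters the construction of $\zeta_p$. On the other hand, writing $a^{p-1} = 1 + p\,q_p(a)$ with the Fermat quotient $q_p(a) = \tfrac{a^{p-1}-1}{p}$ and expanding $a^{k(p-1)} = (1 + p\,q_p(a))^k \equiv 1 + k\,p\,q_p(a) \mod{p^2}$ gives $\Sigma \equiv (p-1) + k\,p\sum_{a=1}^{p-1} q_p(a) \mod{p^2}$. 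Lerch's formula $\sum_{a=1}^{p-1} q_p(a) \equiv W_p \mod{p}$ then turns this into $\Sigma \equiv (p-1) + k\,p\,W_p \mod{p^2}$. I would establish Lerch's formula in passing by comparing $\bigl((p-1)!\bigr)^{p-1} = \prod_{a=1}^{p-1}(1 + p\,q_p(a)) \equiv 1 + p\sum_{a} q_p(a)$ with $(p\,W_p - 1)^{p-1} \equiv 1 + p\,W_p \mod{p^2}$, using $(p-1)! = p\,W_p - 1$ and that $p-1$ is even. Equating the two evaluations of $\Sigma$ yields the displayed congruence, and dividing by $p$ gives the theorem.

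The main obstacle is the valuation bookkeeping behind $\Sigma \equiv p\,B_{k(p-1)} \mod{p^2}$: in the expansion $\Sigma = \tfrac{1}{N+1}\sum_{j=0}^{N} \binom{N+1}{j} B_j\, p^{N+1-j}$ with $N = k(p-1)$, one must verify that every term with $j < N$ vanishes modulo $p^2$, weighing the simple poles of $B_j$ at indices $(p-1)\mid j$ against the powers $p^{N+1-j}$ and against the prefactor $\tfrac{1}{N+1}$ --- the last of which is delicate exactly when $N+1 = k(p-1)+1 \equiv 0 \mod{p}$, i.e.\ when $k\equiv 1 \mod{p}$. Small primes require separate inspection: when $N=2$ (that is, $p=3$, $k=1$) the term $j = N-1$ does not drop out because $B_1 \neq 0$, which is one reason the companion results are stated for $p>3$; nevertheless Lehmer's congruence itself persists at $p=3$ and can be checked by hand.
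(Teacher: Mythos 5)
Your core argument is correct exactly on the range where the paper's own proof operates ($p>3$), but it is a genuinely different proof. The paper never touches a finite power sum directly in this proof: it expands the modified zeta function $\zeta_p^*(1-s)$ as a power series with $p$-adically small higher coefficients (Theorem 5.6), deduces $\zeta_p^*(1)\equiv \zeta_p^*(1-k(p-1)) \pmod{p}$ (Corollary 5.7), identifies $\zeta_p^*(1)=a_0\equiv W_p \pmod{p}$ through $\log_p((p-1)!)$ (Lemma 6.2), and finishes with the interpolation formula at $1-k(p-1)$. Your double count of $\Sigma=\sum_{a=1}^{p-1}a^{k(p-1)}$ modulo $p^2$ is an elementary, finite-level shadow of this: your $q_p(a)$ is the paper's $\mathcal{F}(a)$, your multiplicative treatment of $((p-1)!)^{p-1}$ (Lerch's formula, which you prove correctly, sign included) replaces the paper's logarithmic treatment of $(p-1)!$, and your Faulhaber bookkeeping plays the role of the paper's Lemma 4.3(a). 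Your ``main obstacle'' dissolves cleanly: writing $\frac{1}{N+1}\binom{N+1}{j}=\frac{1}{N+1-j}\binom{N}{j}$ removes the prefactor $\frac{1}{N+1}$ entirely, and with $i=N+1-j$ each term has valuation at least $i-v_p(i)-1\geq 2$ for $i\geq 3$ and $p\geq 5$ (the $i=2$ term vanishes since $B_{N-1}=0$ for even $N\geq 4$), so no restriction like the paper's $k<p(p-1)$ is needed. The trade-off: the paper's machinery yields the $\bmod\ p^2$ refinement (Theorem 2.6) from the same computation by retaining the coefficient $a_1$, whereas your argument needs no $p$-adic $L$-functions at all --- the $\zeta_p$ interpolation framing in your opening paragraph is purely decorative and could be deleted.

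The one genuine error is your closing claim that Lehmer's congruence ``persists at $p=3$ and can be checked by hand.'' It does not persist: for $p=3$, $k=4$ one has $B_8+\tfrac{1}{3}-1=-\tfrac{1}{30}+\tfrac{1}{3}-1=-\tfrac{7}{10}\equiv 2 \pmod{3}$, while $kW_3=4\equiv 1 \pmod{3}$. The failure is exactly where your bookkeeping points: at $p=3$ the term $i=3$ of your expansion, namely $\binom{N}{2}\frac{B_{N-2}}{3}p^3=9\binom{N}{2}B_{N-2}$, need not vanish modulo $p^2$ (for $N=8$ it equals $6$), so $\Sigma\equiv pB_N \pmod{p^2}$ breaks down, even though your Fermat-quotient evaluation of $\Sigma$ remains valid at $p=3$. (Conversely, your worry about $N=2$ is unfounded: that term is $-p^2/2\equiv 0 \pmod{p^2}$.) This defect is inherited from the paper itself, whose statement reads $p\geq 3$ but whose proof opens with ``Suppose $p>3$''; the statement is simply false at $p=3$ for general $k$. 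So your proof should claim, and does prove, exactly the range $p>3$.
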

Setting $k=1$ and $2$ then subtracting the resulting expressions we obtain the following congruences of Lehmer. 
\begin{cor}[Lehmer, 1938]
    Let $p\geq 3$. Then
    \[
    B_{2(p-1)}-B_{p-1} \equiv W_p \mod{p}. 
    \]
\end{cor}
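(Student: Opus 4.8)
The plan is to derive this corollary directly from Theorem \ref{thm: thm 2}, which is already stated for an arbitrary positive integer $k$. Specializing to the two smallest values of $k$ isolates precisely the combination $B_{2(p-1)} - B_{p-1}$, so no new machinery is required.

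First I would apply Theorem \ref{thm: thm 2} with $k=1$, obtaining
\[
B_{p-1} + \frac{1}{p} - 1 \equiv W_p \mod{p},
\]
and then with $k=2$, obtaining
\[
B_{2(p-1)} + \frac{1}{p} - 1 \equiv 2W_p \mod{p}.
\]
Before subtracting, I would note that each left-hand side is a genuine element of $\Z_p$: by the von Staudt--Clausen theorem the only contribution of the prime $p$ to the denominator of $B_{k(p-1)}$ is exactly $\frac{1}{p}$, so $B_{k(p-1)} + \frac{1}{p} \in \Z_p$, and subtracting $1$ keeps us in $\Z_p$. Thus both displays are honest congruences between $p$-adic integers and may be manipulated freely.

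Subtracting the first congruence from the second then cancels the common term $\frac{1}{p} - 1$ on the left, leaving $B_{2(p-1)} - B_{p-1}$, while on the right $2W_p - W_p = W_p$. This yields the desired relation $B_{2(p-1)} - B_{p-1} \equiv W_p \mod{p}$.

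Since all of the substantive content resides in Theorem \ref{thm: thm 2}, there is essentially no obstacle to this argument. The only point deserving a moment's care is the $p$-integrality just noted, which guarantees that the cancellation of the fractional terms $\frac{1}{p}$ is meaningful rather than merely formal; once that is in hand the corollary follows immediately.
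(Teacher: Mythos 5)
Your proof is correct and is exactly the paper's argument: the paper derives this corollary by setting $k=1$ and $k=2$ in Theorem \ref{thm: thm 2} and subtracting, just as you do. Your extra remark on $p$-integrality via von Staudt--Clausen is a fine point of care, though under the paper's convention for congruences (where $\alpha \equiv \beta \mod{\gamma}$ means $(\alpha-\beta)/\gamma$ is $p$-integral) the subtraction is already legitimate without it.
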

In addition to the above two results our method yields the following new congruence. 
\begin{thm}\label{thm: thm 3}
    Let $p>5$ and set $R=1-p^{-1}$. Then for any positive integer $k$, 
    \[
      k(p-1)W_p\left(1 + \frac{pW_p}{2} \right) \equiv -B_{k(p-1)} +R  +k(B_{2(p-1)} - B_{p-1}) - \frac{k}{2}(B_{2(p-1)}-R) \mod{p^2}.
    \]
\end{thm}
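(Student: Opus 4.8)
The plan is to read off $B_{k(p-1)}$, as a function of $k$, from the $p$-adic Riemann zeta function and to carry the expansion to second order. Recall the interpolation property $\zeta_p(1-k(p-1)) = -\left(1 - p^{k(p-1)-1}\right)\frac{B_{k(p-1)}}{k(p-1)}$ and that $\zeta_p$ has a simple pole at $s=1$ whose residue is exactly $R = 1 - p^{-1}$. Thus the regularized quantity $\beta_k := B_{k(p-1)} - R$ — which lies in $\Z_p$ by the von Staudt--Clausen theorem — is the natural object attached to $\zeta_p$, and it is precisely what Theorem \ref{thm: thm 2} evaluates $\mod{p}$. My goal is to compute $\beta_k \mod{p^2}$.

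To extract the second-order information I would evaluate the power sum $S_k = \sum_{a=1}^{p-1} a^{k(p-1)}$ in two ways. First, Faulhaber's formula together with von Staudt--Clausen gives $S_k \equiv p\,B_{k(p-1)} \mod{p^3}$: the potential $p^2$-term carries an odd-index Bernoulli number and so vanishes, and writing $p\,B_{k(p-1)} = (p-1) + p\beta_k$ isolates $\beta_k$. Second, writing $a^{p-1} = 1 + p q_a$ with $q_a = (a^{p-1}-1)/p$ the Fermat quotient and expanding $a^{k(p-1)} = (1 + pq_a)^k$ by the binomial theorem to second order gives $S_k \equiv (p-1) + kp\,\Sigma_1 + \binom{k}{2}p^2\,\Sigma_2 \mod{p^3}$, where $\Sigma_j = \sum_{a=1}^{p-1} q_a^{\,j}$. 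Equating the two expressions and dividing by $p$ yields the key relation $\beta_k \equiv k\,\Sigma_1 + \binom{k}{2}p\,\Sigma_2 \mod{p^2}$, which displays $\beta_k$ as a quadratic (Newton) polynomial in $k$.

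Next I would anchor the two unknown sums using the cases $k=1,2$. Evaluating the key relation at $k=1$ gives $\Sigma_1 \equiv \beta_1 = B_{p-1} - R \mod{p^2}$, and at $k=2$ gives $p\,\Sigma_2 \equiv \beta_2 - 2\beta_1 = B_{2(p-1)} - 2B_{p-1} + R \mod{p^2}$. Substituting these back produces the supercongruence $\beta_k \equiv k(B_{p-1}-R) + \binom{k}{2}\left(B_{2(p-1)} - 2B_{p-1} + R\right) \mod{p^2}$. Finally I would convert the linear-in-$k$ term into Wilson-quotient form by means of a $\mod{p^2}$ sharpening of Lehmer's congruence (Theorem \ref{thm: thm 2}) and its corollary — concretely $\Sigma_1 = \sum_{a=1}^{p-1} q_a \equiv W_p\left(1 - \tfrac{p}{2}W_p\right) \mod{p^2}$, which upgrades Lerch's $\mod{p}$ identity $\Sigma_1 \equiv W_p$, while the corollary $B_{2(p-1)} - B_{p-1} \equiv W_p \mod{p}$ pins down the surviving second-order combination. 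Reorganizing the linear and binomial contributions then moves the Wilson-quotient expression to one side and the combination of $B_{p-1}$ and $B_{2(p-1)}$ to the other, giving the stated congruence.

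I expect the main obstacle to be the genuinely second-order input. Two points require care: (i) establishing the refined Lerch congruence $\sum_{a=1}^{p-1} q_a \equiv W_p\left(1 - \tfrac{p}{2}W_p\right) \mod{p^2}$, which goes beyond the $\mod{p}$ statement packaged in Theorem \ref{thm: thm 2}; and (ii) the von Staudt--Clausen bookkeeping in Faulhaber's formula, in particular the degenerate indices $k \equiv 1 \mod{p}$ for which the coefficient $1/(k(p-1)+1)$ is not a $p$-adic unit and the truncation of $S_k \mod{p^3}$ must be re-examined. In the purely zeta-theoretic formulation the same difficulty resurfaces as the non-integrality of the Iwasawa power-series coefficients of $(s-1)\zeta_p(s)$: each Taylor coefficient contributes at the same $p$-adic order, so the truncation cannot be justified by naive size estimates and must instead be controlled through the $\binom{k}{2}$ combinatorial structure uncovered above.
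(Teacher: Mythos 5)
Your core computation is correct and follows a genuinely different route from the paper's (you work with the power sums $\sum_{a=1}^{p-1}a^{k(p-1)}$ and the binomial expansion of $(1+pq_a)^k$, whereas the paper reads everything off the coefficients $a_0, a_1$ of the power series for $\zeta_p^*(1-s)$ via Corollary \ref{cor: congruence}). In particular your key relation
\[
B_{k(p-1)}-R \;\equiv\; k\bigl(B_{p-1}-R\bigr)+\binom{k}{2}\bigl(B_{2(p-1)}-2B_{p-1}+R\bigr) \pmod{p^2}
\]
is true. But your final step (``reorganizing \ldots gives the stated congruence'') cannot be carried out, because the stated congruence is inconsistent with this relation: the right-hand side of Theorem \ref{thm: thm 3} simplifies to $-B_{k(p-1)}+R+\tfrac{k}{2}\bigl(B_{2(p-1)}-2B_{p-1}+R\bigr)$, which is \emph{linear} in $k$, while your relation forces $\tfrac{k^2}{2}$ in place of $\tfrac{k}{2}$. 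The two differ by $\binom{k}{2}\bigl(B_{2(p-1)}-2B_{p-1}+R\bigr)$, which by Theorem \ref{thm: thm 2} is divisible by $p$ but in general not by $p^2$. Concretely, for $p=7$, $k=2$ the stated right-hand side collapses to $2R-2B_6=\tfrac{5}{3}\equiv 18 \pmod{49}$, while the left-hand side is $12\,W_7\bigl(1+\tfrac{7W_7}{2}\bigr)=446814\equiv 32\pmod{49}$; the discrepancy $14\equiv B_{12}-2B_6+R\pmod{49}$ is exactly the missing $\binom{k}{2}$ term. In other words, your method, carried out correctly, \emph{disproves} the statement as printed rather than proving it.

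The mismatch originates in the paper, not in your interpolation scheme. In the paper's proof, Corollary \ref{cor: congruence} gives $\zeta_p^*(1)\equiv \zeta_p^*(1-k(p-1))-k(p-1)a_1\pmod{p^2}$; multiplying through by $k(p-1)$ produces the term $-k^2(p-1)^2a_1$, but the printed congruence records it as if it were $-k(p-1)^2a_1$, dropping a factor of $k$. Restoring it and using Lemma \ref{lem: a_0 and a_1 princ}(c), the theorem should read with $k^2(B_{2(p-1)}-B_{p-1})-\tfrac{k^2}{2}(B_{2(p-1)}-R)$ on the right, and that corrected statement is exactly what your relation yields once the linear term is anchored at $k=1$. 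Note, however, that this anchoring needs more than you acknowledge: besides the refined Lerch congruence $\sum_a q_a\equiv W_p\bigl(1-\tfrac{p}{2}W_p\bigr)\pmod{p^2}$ (true, but a genuine extra input), you also need $\sum_a q_a^2\equiv 2W_p(1-W_p)\pmod{p}$; both come out of expanding $\prod_a(1+pq_a)=((p-1)!)^{p-1}$ to second order, which is in essence the paper's Lemma \ref{lem: a_0 and a_1 princ}(a). Your second caveat is also real: the truncation $\sum_a a^{k(p-1)}\equiv pB_{k(p-1)}\pmod{p^3}$ is delicate once indices $j\equiv 0\pmod{p-1}$ with large $v_p(j)$ enter Faulhaber's formula, which is precisely why the paper restricts Lemma \ref{lem: power sum for Bern} to exponents below $p(p-1)$ and handles general $k$ through the interpolation property of $\zeta_p^*$, valid for every $k$. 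So: redirect your argument at the $k^2$ version of the statement, supply the product-expansion lemma, and your outline becomes a correct and independent proof of the corrected theorem.
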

We call a prime $p$ a \textit{super-Wilson prime} if $p^2\mid W_p$. With $k=1$ in the previous theorem, we deduce that:
\begin{cor}
    If a prime $p>3$ is a super-Wilson prime, then  
    \begin{equation}\label{eq: super-Wilson}
        4(B_{p-1} -R) \equiv B_{2(p-1)} -R \mod{p^2}.
    \end{equation}   
\end{cor}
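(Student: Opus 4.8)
The plan is to derive the final corollary directly by specializing Theorem \ref{thm: thm 3} at $k=1$ and imposing the defining condition of a super-Wilson prime, namely $p^2\mid W_p$. First I would substitute $k=1$ into the congruence of Theorem \ref{thm: thm 3}, which yields
\[
(p-1)W_p\left(1 + \frac{pW_p}{2}\right) \equiv -B_{p-1} + R + (B_{2(p-1)} - B_{p-1}) - \frac{1}{2}(B_{2(p-1)} - R) \mod{p^2}.
\]
The key observation is that the left-hand side vanishes modulo $p^2$ under the super-Wilson hypothesis: since $p^2 \mid W_p$, the product $(p-1)W_p$ is already $\equiv 0 \mod{p^2}$, and multiplying by the unit $\left(1 + \frac{pW_p}{2}\right)$ (whose second factor $\frac{pW_p}{2}$ is itself divisible by $p^3$, hence by $p^2$) preserves this. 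So the entire left side collapses to $0$, and what remains is a congruence purely among Bernoulli numbers and $R$.

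Next I would simplify the right-hand side. Collecting the terms involving $B_{2(p-1)}$ gives a coefficient of $1 - \frac{1}{2} = \frac{1}{2}$, so those terms contribute $\frac{1}{2}B_{2(p-1)} + \frac{1}{2}R$. Collecting the $B_{p-1}$ terms gives $-B_{p-1} - B_{p-1} = -2B_{p-1}$, and the standalone $R$ terms give $R$. Setting the full expression $\equiv 0$ and clearing the factor of $\frac{1}{2}$ (permissible since $p$ is odd) should produce, after regrouping the constant terms, the stated relation $4(B_{p-1} - R) \equiv B_{2(p-1)} - R \mod{p^2}$. The bookkeeping of the $R$ terms is the one place requiring care: one must verify that the additive constants assemble so that $-R$ appears symmetrically on both sides, which is exactly what makes the clean form in \eqref{eq: super-Wilson} emerge.

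The main obstacle—though it is arithmetic rather than conceptual—is the correct algebraic rearrangement of the right-hand side, in particular tracking the coefficients of $R$ and confirming that no residual $p$-divisible term survives that I have prematurely discarded. I would double-check that the factor $\left(1 + \frac{pW_p}{2}\right)$ is genuinely a $p$-adic unit (equivalently, that $\frac{pW_p}{2}$ makes sense $2$-adically and $p$-adically for $p>3$, which it does since $2$ is invertible mod $p$), so that killing the left-hand side is justified and does not secretly require canceling a nonunit. Once these checks pass, the corollary follows immediately; no further machinery beyond Theorem \ref{thm: thm 3} is needed.
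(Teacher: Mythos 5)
Your proposal is correct and is exactly the paper's own argument: specialize Theorem \ref{thm: thm 3} to $k=1$, note that $p^2\mid W_p$ kills the left-hand side, and rearrange the Bernoulli terms (your bookkeeping checks out: the right side becomes $-2B_{p-1}+\tfrac{1}{2}B_{2(p-1)}+\tfrac{3}{2}R\equiv 0$, which is the stated congruence). The only caveat, inherited from the paper itself, is that Theorem \ref{thm: thm 3} is stated for $p>5$ while the corollary claims $p>3$, so the case $p=5$ is not literally covered by this deduction.
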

One can easily check with the help of a computer that none of the known Wilson primes ($p=5, 13, 563$) are super-Wilson primes. 

\section{$p$-adic preliminaries}
Let $p$ be an odd prime. Denote by $\Z_p$, $\Q_p,$ and $\C_p$, the $p$-adic integers, $p$-adic rationals, and the completion of the algebraic closure of $\Q_p$, respectively. The usual $p$-adic valuation $v_p(\cdot)$ and $p$-adic absolute value $|\cdot|_p$ defined over $\Q$ can be extended to valuations and metrics on $\C_p$. We normalize the extended absolute value so that $|p|_p= p^{-1}$.  \newline 

We choose a fixed but arbitrary embedding of $\overline{\Q}$ into $\C_p$. We will say that an element $\a\in \C_p$ is \textit{$p$-integral} (with respect to this embedding) provided $|\a|_p\leq 1$. If $\alpha, \beta, \gamma \in \C_p$ and $\gamma\neq 0$, we will write $\alpha \equiv \beta \mod{\gamma}$ if the quotient $(\alpha -\beta)/\gamma$ is $p$-integral. \newline

Let $\chi$ be a Dirichlet character with conductor $f$. We will use the convention that $\chi(a)=0$ for all integers $a$ with $\gcd(a, f)>1$ and if $\chi=\chi_0$ is the principal character ($f=1$), then $\chi(a)=1$ for all integers $a$. The generalized Bernoulli numbers $B_{n,\chi}$ are defined by the relation
\[
\sum_{n=0}^\infty B_{n,\chi} \frac{t^n}{n!} = \sum_{a=1}^f \frac{\chi(a)te^{at}}{e^{ft}-1}. 
\]
We note that the $B_{n,\chi}$ are algebraic and valued in the field $\Q(\chi)$ generated by the values of $\chi$. Under our fixed embedding $\overline{\Q} \hookrightarrow \C_p$ we can view the values of $\chi$, and hence the $B_{n, \chi}$, as elements in $\C_p$.  \newline

Assume that $\chi$ is an even Dirichlet character. Then the Kubota--Leopoldt $L$-function is the unique $p$-adic meromorphic function
\[
L_p(s, \chi) : \Z_p \to \C_p
\]
such that 
\[
L_p(1-n, \chi) =  -  (1-\chi\w^{-n}(p)p^{n-1}) \frac{B_{n,\chi}}{n}
\]
for all integers $n\geq 1$. Here, $\w$ is the $p$-adic Teichm\"{u}ller character which is characterized as follows: for any $\a\in \Z_p^\times$,  $\w(a)$ is the unique $(p-1)$-st root of unity in $\Z_p$ satisfying $\w(a)\equiv a\mod{p}$. One can easily check that $\w$ defines a character with conductor $p$ and that every $a\in \Z_p^\times$ can be uniquely written as $a=\w(a)\dia{a}$ where $\dia{a}\equiv 1 \mod{p}$. The $p$-adic $L$-function is analytic unless $\chi=\chi_0$ is the principal character. In this case, it has a simple pole at $s=1$ with residue $R= 1-p^{-1}$. We note that if $n\equiv 0 \mod{p-1}$ then $L_p(1-n, \chi)$ is the value of the Dirichlet $L$-function $L(1-n, \chi)$ with the Euler factor at $p$ removed. \newline 

We recall that the non-zero elements of $\C_p$ can be decomposed as 
\[
\C_p^\times  = p^{\Q}\times W \times U_1
\]
where $p^\Q$ is the set $\{p^r : r\in \Q\}$, $W$ is the set of roots of unity with order coprime to $p$, and $U_1 = \{x\in \C_p : |x-1|_p<1\}$ (see Proposition 5.4 of \cite{Washington} or section III.4 of \cite{Koblitz} for details). The \textit{$p$-adic logarithm }is defined by the formal power series
    \[
    -\log_p(1-x) = \sum_{n=1}^\infty \frac{x^n}{n}.
    \]
    This series converges $p$-adically for all $x\in U_1$. Moreover, it can be (\textit{uniquely}) extended to all of $\C_p^\times$ by declaring that $\log_p(p)=\log_p(w)=0$ for any root of unity $w\in W$. Moreover, this extension of the $p$-adic logarithm satisfies the familiar property that 
    \[
    \log_p(\alpha \beta) = \log_p(\alpha) + \logp(\beta)
    \]
    for all $\alpha, \beta \in \C_p^\times$. Further details can be found in \cite[Proposition 5.4]{Washington}. \newline

For square-free positive integers of the form $d=pm$ where $p$ is an odd prime, it turns out that the $p$-adic logarithm of the fundamental unit of $\Q(\sqrt{d})$ admits a pleasantly simple expression in $\C_p$. We record this here for later use and refer the reader to Proposition 2.3 of \cite{Fellini2025} for a proof. 
\begin{lem}\label{prop: p-adic log of fund. unit}
    Fix an odd prime $p$ and let $d=pm$ be a positive square-free integer. Denote the fundamental unit of $\Q(\sqrt{d})$ by $\eps = \frac{\delta}{2}\left( t+ u\sqrt{d}\right)$ where $\delta = 1$ if $d\equiv 1 \mod{4}$ and $\delta =2$ if $d\equiv 2,3\mod{4}$.  In $\C_p$ we have,  
    \[
    \frac{\log_p(\eps)}{\sqrt{d}} =\sum_{n=0}^\infty \frac{d^{n}}{2n+1}\left(\frac{u}{t} \right)^{2n+1}.
    \]
\end{lem}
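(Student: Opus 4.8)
The plan is to reduce $\logp(\eps)$ to the standard series for $\logp(1+x)$ by exploiting the norm relation satisfied by the fundamental unit. Let $\bar\eps = \frac{\delta}{2}(t-u\sqrt d)$ denote the Galois conjugate of $\eps$; under our fixed embedding $\overline{\Q}\hookrightarrow\C_p$ (which sends $\sqrt d$ to a chosen square root in $\C_p$) one has $\eps\bar\eps = N_{\Q(\sqrt d)/\Q}(\eps) = \frac{\delta^2}{4}(t^2-du^2) = \pm 1$. Since $\pm 1$ are roots of unity of order coprime to $p$, the extension of $\logp$ to $\C_p^\times$ recorded above gives $\logp(\eps)+\logp(\bar\eps) = \logp(\pm 1) = 0$, so $\logp(\bar\eps) = -\logp(\eps)$. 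Hence, writing $x := u\sqrt d/t$ (the factors $\delta/2$ cancel in the ratio),
\[
\logp(\eps) = \tfrac12\big(\logp(\eps)-\logp(\bar\eps)\big) = \tfrac12\logp\!\left(\frac{\eps}{\bar\eps}\right) = \tfrac12\logp\!\left(\frac{1+x}{1-x}\right).
\]

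Before expanding, I would check convergence, i.e.\ that $|x|_p<1$ (equivalently $1\pm x\in U_1$); this is the only real obstacle and it rests on the norm equation $t^2-du^2 = \pm 4/\delta^2$. As $u$ is a rational integer, $v_p(u)\geq 0$, so $v_p(du^2) = v_p(d)+2v_p(u) = 1+2v_p(u)\geq 1$, whereas $v_p(\pm 4/\delta^2)=0$ for the odd prime $p$ and $\delta\in\{1,2\}$. The ultrametric inequality then forces $v_p(t^2)=\min\{0,\,v_p(du^2)\}=0$, so $t$ is a $p$-adic unit. Since $v_p(\sqrt d)=\tfrac12$, we get $v_p(x)=v_p(u)+\tfrac12\geq\tfrac12>0$, whence $|x|_p\leq p^{-1/2}<1$ as required. (The same count gives $v_p(\eps)=0$, so $\eps\in\C_p^\times$ and $\logp(\eps)$ is well defined.)

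With convergence in hand, I would apply $-\logp(1-y)=\sum_{n\geq 1}y^n/n$ with $y=x$ and $y=-x$ and subtract: the even-degree terms cancel and the odd-degree terms double, yielding
\[
\logp\!\left(\frac{1+x}{1-x}\right) = 2\sum_{n=0}^\infty\frac{x^{2n+1}}{2n+1},
\]
so that $\logp(\eps)=\sum_{n\geq 0}x^{2n+1}/(2n+1)$. Substituting $x^{2n+1}=\sqrt d\,d^{\,n}(u/t)^{2n+1}$ and dividing by $\sqrt d$ gives the claimed identity. The valuation estimate of the second step is exactly what validates the power-series expansion of $\logp(1\pm x)$; the relations in the first step, by contrast, use only that $\eps,\bar\eps\in\C_p^\times$ together with the homomorphism property and the vanishing of $\logp$ on roots of unity.
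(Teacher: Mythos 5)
Your proof is correct and complete. The paper does not prove this lemma internally---it defers to Proposition 2.3 of \cite{Fellini2025}---but your argument is exactly the standard route that the cited result follows: use $N_{\Q(\sqrt d)/\Q}(\eps)=\pm1$ and the vanishing of $\logp$ on roots of unity to get $\logp(\bar\eps)=-\logp(\eps)$, verify $|u\sqrt d/t|_p\le p^{-1/2}<1$ from the norm equation (including the key observation that $t$ is a $p$-adic unit), and then extract the odd part of the logarithm series; nothing is missing.
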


\section{Power sums}
Throughout this section we assume that $\chi$ is a Dirichlet character with conductor $f$ and that $p$ is an odd prime coprime to $f$. As before, we set $F=\lcm(p,f) =pf$. We are interested in congruences for sums of the form:
\[
P(k, F, \chi) : = \frac{1}{F}\sum_{a=1}^F \chi(a) a^k\,\,\,\, \text{ and }\,\,\,\,\, P'(k, F, \chi) : =\frac{1}{F} \sum_{\substack{a=1\\ p\nmid a}}^F \chi(a) a^k. 
\]
These two quantities are related by the identity
\begin{equation}\label{eq: restricted sum}
    P'(k, F, \chi) = P(k, F, \chi) - \chi(p)p^{k-1} P(k, f, \chi).
\end{equation}
We recall that the power sums $P(k, F, \chi)$
are intimately related to generalized Bernoulli numbers via the formula \cite{IwasawaBook}:
\begin{equation}\label{eq: power sum}
P(k, F, \chi) = F^kB_{0, \chi} + \sum_{j=1}^k \binom{k}{j-1}\frac{B_{j,\chi}}{j}F^{k-j}.    
\end{equation}
Note that when $\chi$ is non-principal, $B_{0, \chi}=0$. When $\chi$ is principal, we will simply write $P(k,F)$ or $P'(k, F)$.  Thanks in large part to L. Carlitz \cite{Carlitz1959}, the arithmetic of generalized Bernoulli numbers is well understood.
\begin{thm}[Carlitz, 1959]\label{thm: Carlitz p-integral}
    Suppose $\chi$ is a  Dirichlet character of conductor $f>1$. If $f$ has at least two prime factors, then for any $n\geq 1$, $B_{n, \chi}/n$ is an algebraic integer. If $f=q^r$ for some prime $q$, then $B_{n, \chi}/n$ is $p$-integral for all primes $p$ coprime to $f$. 
\end{thm}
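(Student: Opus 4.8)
The plan is to reduce the statement to a local claim and prove it prime by prime: for each rational prime $p$ I would show that $B_{n,\chi}/n$ is $p$-integral, and then read off the two assertions from the set of primes the argument reaches. The engine is the power-sum formula (\ref{eq: power sum}). When $p\nmid f$ I would combine it with the restricted-sum identity (\ref{eq: restricted sum}) to obtain, for $F=\lcm(p,f)=pf$, the expansion $P'(n,F,\chi)=\sum_{j=1}^{n}\binom{n}{j-1}\frac{B_{j,\chi}}{j}(1-\chi(p)p^{j-1})F^{n-j}$; when $p\mid f$ I would instead apply (\ref{eq: power sum}) directly at $F=f$, which has the same shape since $\chi(p)=0$. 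In either case I would induct on $n$ with the hypothesis that $B_{j,\chi}/j$ is $p$-integral for $1\le j<n$: every term with $j<n$ carries a factor ($F^{n-j}$, resp.\ $f^{n-j}$) that is divisible by $p$ and is therefore $p$-integral by hypothesis, while the $j=n$ term isolates $B_{n,\chi}$ up to a $p$-unit. Hence the induction reduces to a single external fact: that the relevant power sum is itself $p$-integral.

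That fact I would prove by character orthogonality. For $p\nmid f$, set $T=\sum_{1\le a\le pf,\;p\nmid a}\chi(a)a^{n}$; grouping the $a$ according to the pair of residues $(a\bmod p,\,a\bmod f)$ via the Chinese Remainder Theorem and using the integer congruence $a^{n}\equiv(a\bmod p)^{n}\pmod p$, the sum factors modulo $p$ as $\big(\sum_{i=1}^{p-1}i^{n}\big)\big(\sum_{j\bmod f}\chi(j)\big)$, whose second factor is $0$ because $\chi$ is non-principal. Thus $p\mid T$, and since $F=pf$ the quotient $P'(n,F,\chi)=T/(pf)$ is $p$-integral. When $p\mid f$ while $f$ still has a second prime factor, I would repeat the computation modulo $p^{r}$ with $r=v_p(f)$: factoring $\chi=\chi_p\chi_g$ into its $p$-part and its prime-to-$p$ part, the sum $\sum_{a\bmod f}\chi(a)a^{n}$ splits off the factor $\sum_{j}\chi_g(j)=0$ (here $\chi_g$ is the non-principal factor), which yields divisibility by $p^{r}$ and hence $p$-integrality of $P(n,f,\chi)$. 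This is precisely where the theorem's dichotomy originates: for $f=q^{r}$ there is no second prime to furnish a vanishing orthogonality sum at $p=q$, so $q$-integrality cannot be claimed, while for every other prime the same argument goes through unchanged.

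The main obstacle is the passage from ``$B_{n,\chi}$ is $p$-integral'' to ``$B_{n,\chi}/n$ is $p$-integral,'' i.e.\ maintaining the inductive hypothesis when $p\mid n$; equivalently, proving $v_p(B_{n,\chi})\ge v_p(n)$. The difficulty is structural: the binomial coefficient $\binom{n}{n-1}=n$ in (\ref{eq: power sum}) cancels the $1/n$, so the power sums only ever control $B_{n,\chi}$ and never $B_{n,\chi}/n$. The arithmetic input I would use is a Kummer-type congruence, anchored on the power sums rather than on the Bernoulli numbers to avoid circularity: working at the higher moduli $F=p^{s}f$ and using $a^{n}\equiv a^{m}\pmod{p^{s}}$ for $p\nmid a$ whenever $n\equiv m\pmod{(p-1)p^{s-1}}$, the restricted sums $P'(n,F,\chi)$ and $P'(m,F,\chi)$ agree modulo $p^{s}$. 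Carrying this congruence alongside the integrality induction—so that the already-established integrality of the lower $B_{j,\chi}/j$ lets one translate the power-sum congruence into one for the normalized Bernoulli numbers—is what should force the missing divisibility by $p^{v_p(n)}$; I expect this interlocking of the integrality induction with the congruences to be the delicate point. With $p$-integrality in hand for the relevant primes, the theorem follows: if $f$ has at least two prime factors the argument reaches every $p$, so $B_{n,\chi}/n$ is an algebraic integer, while if $f=q^{r}$ it reaches every $p\nmid f$. The base case $n=1$ is immediate, since $B_{1,\chi}=\frac1f\sum_{a}\chi(a)a$ is visibly $p$-integral for $p\nmid f$ and, for $p\mid f$, is covered by the same mod-$p^{r}$ vanishing.
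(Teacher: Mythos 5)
The paper does not actually prove this statement---it is imported as a known result, with a citation to Carlitz's 1959 paper---so your proposal has to be judged on its own merits. The framework you set up is sound as far as it goes: the identity $P'(n,F,\chi)=\sum_{j=1}^{n}\binom{n}{j-1}\frac{B_{j,\chi}}{j}\left(1-\chi(p)p^{j-1}\right)F^{n-j}$ is correct, the CRT/orthogonality computation showing $p\mid T$ for $p\nmid f$ is correct, and the factorization $\chi=\chi_p\chi_g$ is the right way to use the second prime factor when $p\mid f$. But the theorem is about $B_{n,\chi}/n$, and the step you explicitly defer---showing $v_p(B_{n,\chi})\ge v_p(n)$ when $p\mid n$---is not a technical afterthought; it is the actual content of the theorem (the Adams-type half, as opposed to the von Staudt--Clausen half). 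You name it as the delicate point and say the congruences ``should force'' it; that is an acknowledgment of the gap, not a proof. Moreover, without it your induction is not self-sustaining: the inductive hypothesis requires $p$-integrality of $B_{j,\chi}/j$ for $j<n$, but what stage $n$ actually produces is only $p$-integrality of $B_{n,\chi}$, so once the induction passes an index divisible by $p$ the hypothesis needed later is unavailable (and past $n=p^2$ one cannot even conclude integrality of $B_{n,\chi}$ by this route).

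The mechanism you propose to close the gap also fails as stated. For $F=p^sf$, the termwise congruence $a^n\equiv a^m \pmod{p^s}$ shows only that the numerator $\sum'\chi(a)(a^n-a^m)$ is divisible by $p^s$; dividing by $F$, whose $p$-adic valuation is exactly $s$, consumes that divisibility entirely, so all you get is that $P'(n,F,\chi)-P'(m,F,\chi)$ is $p$-integral---not that the two sums agree modulo $p^s$. A genuine mod-$p^s$ Kummer congruence requires cancellation across the sum beyond the termwise one, and establishing that cancellation is essentially equivalent to the integrality statement you are trying to prove (it is the boundedness/analyticity of the $p$-adic $L$-function in disguise), so the sketch is circular at exactly the critical point. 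The standard way to supply the missing idea is regularization: prove that $\left(1-\chi(c)c^n\right)B_{n,\chi}/n$ is $p$-integral for every $c$ coprime to $pf$ (a twisted power-sum or measure-theoretic argument), and then choose $c$ so that $1-\chi(c)c^n$ is a $p$-unit; note that arranging such a $c$ when $(p-1)\mid n$ is where the hypothesis on the conductor of $\chi$ enters a second time, beyond the vanishing character sums in your orthogonality step. As written, your proposal establishes (a range of cases of) the $p$-integrality of $B_{n,\chi}$, but not the theorem about $B_{n,\chi}/n$.
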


This is a generalization of the classical theorems of von Staudt--Clausen and Adams (see for example Theorem 2.7 and Theorem 2.11, respectively in \cite{MurtyPAdic}). Using equation (\ref{eq: power sum}) and Theorem \ref{thm: Carlitz p-integral} we have
\begin{lem}\label{lem: power sum mod p^2}
    Suppose $\chi$ is a non-principal Dirichlet character with conductor $f$ and let $k\geq 3$. Let $p$ be an odd prime coprime to $f$ and set $F=pf$.
    \begin{enumerate}[label=(\alph*)]
        \item If $\chi(-1) = (-1)^k$, then
        \[
        P(k, F, \chi) \equiv B_{k, \chi} \mod{p^2}
        \]
        unless $\chi(-1)=-1$ and $k=3$, in which case
        \[
       P(3, F, \chi) = B_{3, \chi} +FB_{1, \chi}.
        \]
        \item If $\chi(-1)\neq (-1)^k$, then 
        \[
       P(k, F, \chi) \equiv \frac{FB_{k-1, \chi}}{2} \mod{p^2}. 
        \]
    \end{enumerate}
    
\end{lem}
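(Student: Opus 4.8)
The plan is to extract everything from the closed form (\ref{eq: power sum}) and control it $p$-adically using two ingredients: the integrality of the normalized Bernoulli numbers furnished by Theorem \ref{thm: Carlitz p-integral}, and the standard parity relation $B_{n,\chi}=0$ whenever $\chi(-1)\neq(-1)^n$, which holds without exception for non-principal $\chi$. Since $\chi$ is non-principal we have $B_{0,\chi}=0$, so (\ref{eq: power sum}) reads $P(k,F,\chi)=\sum_{j=1}^{k}\binom{k}{j-1}\frac{B_{j,\chi}}{j}F^{k-j}$, and the whole argument reduces to a valuation estimate on the individual summands.

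First I would bound each term. Because $p\nmid f$ we have $F=pf$ with $v_p(F)=1$, and the hypotheses of Theorem \ref{thm: Carlitz p-integral} are met, so every quotient $B_{j,\chi}/j$ is $p$-integral; as $\binom{k}{j-1}\in\Z$, the $j$-th summand has $p$-adic valuation at least $k-j$. Hence every term with $j\leq k-2$ is divisible by $p^2$ and may be discarded modulo $p^2$, leaving only $j=k$ and $j=k-1$. The first contributes $\binom{k}{k-1}\frac{B_{k,\chi}}{k}=B_{k,\chi}$, and the second $\binom{k}{2}\frac{B_{k-1,\chi}}{k-1}F$, in which the factor $k-1$ cancels against the binomial coefficient, so no denominator divisible by $p$ can appear even when $p\mid k-1$. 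Thus, modulo $p^2$, $P(k,F,\chi)$ equals $B_{k,\chi}$ plus an explicit multiple of $FB_{k-1,\chi}$.

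It then remains only to decide which of the two surviving terms is actually present, and this is precisely where parity enters. In case (a), $\chi(-1)=(-1)^k$ forces $\chi(-1)\neq(-1)^{k-1}$, so $B_{k-1,\chi}=0$ and the $F$-term drops out, giving $P(k,F,\chi)\equiv B_{k,\chi}\mod{p^2}$. In case (b), $\chi(-1)\neq(-1)^k$ makes $B_{k,\chi}=0$, so only the $FB_{k-1,\chi}$ term survives. The lone situation demanding separate bookkeeping is $k=3$ with $\chi(-1)=-1$: here the congruence $P(3,F,\chi)\equiv B_{3,\chi}\mod{p^2}$ still holds, but the discarded block $j\leq k-2$ has collapsed to the single term $j=1$, equal to $B_{1,\chi}F^{2}$, while the $j=2$ term vanishes because $B_{2,\chi}=0$ by parity; retaining rather than discarding this one low-order term yields the exact value $P(3,F,\chi)=B_{3,\chi}+B_{1,\chi}F^{2}$.

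I do not anticipate a serious obstacle, since the result is essentially a valuation truncation of (\ref{eq: power sum}) followed by a parity check. The only point requiring genuine care is the truncation itself, which is legitimate solely because Theorem \ref{thm: Carlitz p-integral} guarantees $p$-integrality of each $B_{j,\chi}/j$: without it, a denominator divisible by $p$ could offset the gain from the factor $F^{k-j}$ and destroy the estimate. Beyond that, one need only track the single surviving low-order term in the exceptional $k=3$ case.
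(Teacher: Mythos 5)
Your proof follows exactly the route the paper intends (the paper offers no justification for this lemma beyond ``using equation (\ref{eq: power sum}) and Theorem \ref{thm: Carlitz p-integral}''): discard all terms with $j\le k-2$ of the closed form by Carlitz $p$-integrality of $B_{j,\chi}/j$, then kill one of the two surviving terms by parity. Your algebra is also correct. The problem is how you close case (b): the surviving $j=k-1$ term is $\binom{k}{2}\frac{B_{k-1,\chi}}{k-1}F=\frac{k}{2}FB_{k-1,\chi}$, and you pass from this to the conclusion of (b), which reads $\frac{1}{2}FB_{k-1,\chi}$, without comment (``only the $FB_{k-1,\chi}$ term survives''). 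That step is a genuine gap: the two expressions agree modulo $p^2$ only if $p\mid (k-1)B_{k-1,\chi}$, which has no reason to hold. In fact your formula is the right one and the lemma as printed is erroneous: for $\chi$ the quadratic character modulo $3$, $p=5$, $k=4$ (so $F=15$, $B_{3,\chi}=2/3$) one computes
\[
P(4,15,\chi)=\tfrac{1}{15}\sum_{a=1}^{15}\chi(a)a^4=-1105\equiv 20 \pmod{25},
\]
which equals $\frac{k}{2}FB_{k-1,\chi}=20$ and not $\frac{1}{2}FB_{k-1,\chi}=5$. The same is true of the exceptional case in (a), which you silently correct: the exact identity is $P(3,F,\chi)=B_{3,\chi}+F^2B_{1,\chi}$ (your version), not $B_{3,\chi}+FB_{1,\chi}$; and since $B_{1,\chi}$ is $p$-integral when $p\nmid f$, the extra term is $\equiv 0 \pmod{p^2}$, so the ``exception'' is in fact vacuous for the congruence, exactly as you observe.

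So your derivation is sound, but it establishes a corrected statement rather than the stated one, and a careful write-up must flag both discrepancies explicitly rather than elide them. For what it is worth, neither discrepancy propagates into the rest of the paper: in the proof of Lemma \ref{lem: a_1 non-princ}, full mod-$p^2$ precision is needed only through case (a) (the exponents $(p^2+4)r$ and $(p^2+2)r$ have the parity of $r$, matching $\psim(-1)=(-1)^r$), while case (b) enters only in the terms $S_2$ and $S_3$, where the relevant power sums are multiplied by an extra factor of $F$; there only their $p$-integrality, respectively their vanishing modulo $p$, is used, and both of these are insensitive to the factor of $k$.
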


The analogous result for the principal character is similar but requires slightly more care. We prove the cases that we will need.  
\begin{lem}\label{lem: power sum for Bern}
    Let $p>3$ be a rational prime and set $F=p$.  Further, suppose that $3\leq k <p(p-1)$.  
    \begin{enumerate}[label=(\alph*)]
        \item If $(p-1)\mid k$, then 
        \[
            P(k, F) + \frac{1}{p} \equiv B_{k} + \frac{1}{p}\mod{p^2} 
        \]
        \item If $(-1)^k=1$ and $(p-1)\nmid k$, then 
        \[
        P(k, F) \equiv B_k + \frac{F^2k(k-1)}{6}B_{k-2} \mod{p^2}.
        \]
        
        \item If $(-1)^k =-1$, then 
        \[
        P(k, F) \equiv \frac{Fk}{2} B_{k-1} \mod{p^2}. 
        \]
    \end{enumerate}
\end{lem}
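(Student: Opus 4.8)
The plan is to substitute $\chi=\chi_0$ and $F=p$ into the power-sum identity (\ref{eq: power sum}) and then determine, in each regime, exactly which summands survive modulo $p^2$. Since $B_{0,\chi_0}=B_0=1$, the identity reads
\[
P(k,p)=p^{k}+\sum_{j=1}^{k}T_j,\qquad T_j:=\binom{k}{j-1}\frac{B_j}{j}\,p^{\,k-j},
\]
so the whole problem reduces to estimating $v_p(T_j)$. The three inputs I would use are: $B_j=0$ for odd $j\geq 3$; the von Staudt--Clausen theorem, giving $v_p(B_j)=-1$ when $j$ is even with $(p-1)\mid j$ and $v_p(B_j)\geq 0$ otherwise; and the hypothesis $k<p(p-1)$, which ensures that if $1\leq j\leq k$ and $(p-1)\mid j$ then $p\nmid j$ (otherwise $p(p-1)\mid j$), so that $v_p(B_j/j)=-1$ exactly rather than worse. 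Because $p>3$, the integers $2,3,6$ are units in $\Z_p$ and may be inverted freely.

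The valuation bookkeeping is the technical heart. From $v_p(T_j)\geq (k-j)+v_p(B_j/j)$ one sees a summand can only survive modulo $p^2$ when $k-j$ is small; I will call $\{j:k-j\leq 2\}=\{k-2,k-1,k\}$ the \emph{window}. Outside the window ($j\leq k-3$), a non-pole summand has $v_p(T_j)\geq k-j\geq 3$, while a pole summand ($(p-1)\mid j$, where the pole lowers the bound to $v_p\geq k-j-1$) still has $v_p(T_j)\geq (k-j)-1\geq 2$; either way it is $\equiv 0\pmod{p^2}$. The terms $p^{k}$ (valuation $k\geq 3$) and $T_1=-\tfrac12 p^{k-1}$ (valuation $k-1\geq 2$) also drop out. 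Inside the window there is at most one pole, since multiples of $p-1\geq 4$ are spaced more than $3$ apart, so it suffices to record the three candidates; a short binomial simplification (using $\binom{k}{j-1}/j=\binom{k+1}{j}/(k+1)$) gives
\[
T_k=B_k,\qquad T_{k-1}=\frac{k}{2}\,B_{k-1}\,p,\qquad T_{k-2}=\frac{k(k-1)}{6}\,B_{k-2}\,p^{2}.
\]

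It then remains to specialize. In case (c), $k$ is odd, so $T_k=B_k=0$ and $T_{k-2}$ vanishes as well ($B_{k-2}=0$ for $k\geq 5$; for $k=3$ the term $T_{k-2}=T_1$ was already killed), leaving only $T_{k-1}=\tfrac{k}{2}B_{k-1}p=\tfrac{Fk}{2}B_{k-1}$, as claimed. In case (b), $k$ is even with $(p-1)\nmid k$, so $T_{k-1}=0$ (since $k-1\geq 3$ is odd), $T_k=B_k$ is $p$-integral, and $T_{k-2}=\tfrac{F^2 k(k-1)}{6}B_{k-2}$ is kept verbatim; it vanishes mod $p^2$ precisely when $(p-1)\nmid(k-2)$ and otherwise contributes a valuation-$1$ term, which is exactly the stated formula. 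In case (a), $(p-1)\mid k$, so again $T_{k-1}=0$, while now $k-2\equiv -2\not\equiv 0\pmod{p-1}$ (as $p-1>2$) forces $B_{k-2}$ to be $p$-integral, whence $T_{k-2}\equiv 0\pmod{p^2}$; the only survivor is $T_k=B_k$, giving $P(k,p)\equiv B_k\pmod{p^2}$. Adding $\tfrac1p$ to both sides recasts this between genuinely $p$-integral quantities: von Staudt--Clausen makes $B_k+\tfrac1p$ $p$-integral, and the elementary count $\sum_{a=1}^{p}a^{k}\equiv -1\pmod p$ does the same for $P(k,p)+\tfrac1p$.

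I expect the main obstacle to be precisely this valuation bookkeeping in the presence of the poles of $B_j$: one must be certain that no pole summand below the window slips through with valuation $<2$, and here the hypothesis $k<p(p-1)$ is genuinely needed, since a simultaneous $p\mid j$ and $(p-1)\mid j$ would create a double pole and break the count. Checking that the binomial coefficients never further depress the valuation of a surviving candidate, and handling the boundary values $k=3$ and $k=p-1$ together with the small residues of $k$ modulo $p-1$, is where the care lies; the remainder is routine.
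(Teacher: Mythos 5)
Your proposal is correct and follows essentially the same route as the paper's proof: expand $P(k,p)$ via the power-sum identity (\ref{eq: power sum}), isolate the three top terms $B_k$, $\tfrac{Fk}{2}B_{k-1}$, $\tfrac{F^2k(k-1)}{6}B_{k-2}$, kill the tail $j\leq k-3$ modulo $p^2$ using the hypothesis $k<p(p-1)$ together with von Staudt--Clausen/Adams, and then split into cases via the vanishing of odd Bernoulli numbers and the pole condition $(p-1)\mid j$. Your write-up is in fact more explicit than the paper's (which dispatches parts (b) and (c) with ``follow similarly''), and the only quibble---your claim that non-pole tail terms satisfy $v_p(T_j)\geq k-j$ needs Adams' theorem rather than von Staudt--Clausen alone, though the weaker bound $v_p(T_j)\geq k-j-1\geq 2$ already suffices---does not affect the argument.
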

\begin{rem}
    We note that in (b), if $k\not\equiv 2 \mod{(p-1)}$, then $B_{k-2}$ is $p$-integral and hence $F^2B_{k-2}\equiv 0$ $\mod{p^2}$.  
\end{rem}
\begin{proof}
    We write
    \[
    P(k, F) = F^k -\frac{F^{k-1}}{2} + \sum_{j=2}^k \binom{k}{j-1} \frac{B_j}{j} F^{k-j}. 
    \]
    Adam's congruence ensures that $pB_j$ is $p$-integral for every $j$. We write the above sum as
    \[
    \sum_{j=2}^k \binom{k}{j-1} \frac{B_j}{j} F^{k-j} = B_k + \frac{Fk}{2}B_{k-1} + \frac{F^2k(k-1)}{6}B_{k-2} + \sum_{j=2}^{k-3} \binom{k}{j-1}\frac{B_j}{j} F^{k-j}. 
    \]
    By our assumption on $k$, $B_j/j$ can have at most a single power of $p$ in its denominator and hence for each $j$ such that $k-j\geq 3$, $F^{k-j}B_j/j$ is $p$-rational with numerator divisible by $p^2$. Therefore, 
    \begin{equation}\label{eq: bern power sum}
            P(k, F) = B_k + \frac{Fk}{2}B_{k-1} + \frac{F^2k(k-1)}{6} B_{k-2} + T
    \end{equation}
    where $T$ is $p$-rational and $T\equiv 0 \mod{p^2}$.  If $k\equiv 0 \mod{p-1}$, $B_{k-1}=0$ and $B_{k-2}$ is $p$-integral. Adding $1/p$ to both sides of equation (\ref{eq: bern power sum}) and reducing $\mod{p^2}$ we obtain (a). Parts (b) and (c) follow similarly. 
\end{proof}

\section{Congruence relations}\label{sec: congruences} 
In this section we will assume that $\chi$ is an even Dirichlet character with conductor $f$ and that $p$ is a rational prime greater than $3$. Keeping with prior notation, we set $F=\lcm(f, p)$.  
\newline 

The Kubota--Leopoldt $L$-function $L_p(s, \chi)$ can be written as \cite{washington1976}:
\begin{equation}\label{eq: washington expression}
    L_p(1-s, \chi) = -\frac{1}{s}\cdot \frac{1}{F} \sum_{\substack{a=1\\ p\nmid a}}^F \chi(a) \dia{a}^s \sum_{j=0}^\infty \binom{s}{j}\left(\frac{F}{a}\right)^j B_j.
\end{equation}
where
\[
\binom{x}{j} = \frac{x(x-1)\cdots (x-j+1)}{j!}.
\]
This is the usual binomial coefficient when $x$ is a positive integer. If the conductor of $\chi$ is not divisible by $p^2$, there is a power series representation of $L_p(1-s, \chi)$ of the form:
\[
L_p(1-s, \chi) = \sum_{i=0}^\infty a_is^i
\]
where $|a_0|\leq 1$ and $|a_i|<1$ for all $i\geq 1$. In fact, the argument in \cite[Theorem 5.12] {Washington} applies to the principal character as well, but in this case we have an expansion of the form
\[
\zeta_p(1-s) = -\frac{R}{s} + \sum_{i=0}^\infty a_is^i
\]
with $|a_0|_p\leq 1$ and $|a_i|_p<1$ \cite[Theorem 5.12]{Washington}. The aim of this section is to conduct a refined analysis of the coefficients $a_i$. We begin by rewriting the expression in (\ref{eq: washington expression}). The polynomial $\binom{x}{j}$ can be expressed as
\[
\binom{x}{j} = \frac{1}{j!}\sum_{k=0}^j \s(j,k) x^k, 
\]
where $\s(j,k)$ is the $(j,k)$-Stirling number of the first kind, i.e., $|\s(j,k)|$ is the number of permutations of $j$ elements consisting of $k$ disjoint cycles. 
Putting this formula into the power series for $L_p(1-s, \chi)$ and changing the order of summation we obtain:
\begin{equation}\label{eq: p-adic power series}
    L_p(1-s, \chi) = -\frac{1}{s}\cdot \frac{1}{F} \sum_{\substack{a=1\\ p\nmid a}}^F \chi(a) \dia{a}^s g_a(s)
\end{equation}
where 
\[
g_a(s) = \sum_{k=0}^\infty b_k(a)s^k
\]
and
\[
b_k(a) = \sum_{j=k}^\infty \left(\frac{F}{a} \right)^j \frac{B_j}{j!} \s(j,k).
\]

For the purposes of obtaining congruence relations for the values of $L_p(1-s, \chi)$, we need to determine the $p$-adic valuation of the $b_k(a)$. To this end, we note that the Bernoulli numbers $B_j$ are rational numbers with square-free denominators and that $v_p(j!) \leq \frac{j}{p-1}$. Therefore, we obtain a lower bound  
\[
v_p\left( \left(\frac{F}{a} \right)^j \frac{B_j}{j!}\right) \geq j\left(\frac{p-2}{p-1}\right)-1.
\]
From which it is clear that the series defining $b_k(a)$ converges. The worst case for this lower bound occurs when $p=5$. In particular, if $j\geq 6$ then 
\[
v_p\left( \left(\frac{F}{a} \right)^j \frac{B_j}{j!}\right) \geq 3.
\]

\begin{lem}\label{lem: g_a(s) appprox}
Suppose $s\in \Z_p$. If $p\geq 5$, then 
    \[
     g_a(s) \equiv b_0(a) + b_1(a)s + b_2(a)s^2 \mod{p^3}.
    \]
\end{lem}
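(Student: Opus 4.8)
The plan is to reduce everything to a valuation estimate on the individual Taylor coefficients $b_k(a)$. Writing the error as the tail
\[
g_a(s) - \bigl(b_0(a) + b_1(a)s + b_2(a)s^2\bigr) = \sum_{k=3}^\infty b_k(a)s^k,
\]
I observe that $s\in\Z_p$ forces $v_p(s^k)\geq 0$, so it suffices to prove $v_p(b_k(a))\geq 3$ for every $k\geq 3$. The non-archimedean property of $v_p$ together with the convergence of the series (already noted above, since the leading index $j=k$ drives $v_p(b_k(a))\to\infty$) then forces the whole tail to lie in $p^3\Z_p$, i.e.\ to be $\equiv 0 \mod{p^3}$.

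To bound a fixed $b_k(a)=\sum_{j\geq k}(F/a)^j\,\tfrac{B_j}{j!}\,\s(j,k)$, I would first record that $\s(j,k)\in\Z$ never lowers a valuation and that $v_p(F/a)=1$ because $F=pf$ and $p\nmid a$. The estimate derived just above the lemma,
\[
v_p\!\left(\left(\frac{F}{a}\right)^{j}\frac{B_j}{j!}\right)\geq j\,\frac{p-2}{p-1}-1,
\]
already disposes of every summand with $j\geq 6$: there the right-hand side is $\geq 3$ (the worst case being $p=5$, where it equals $3.5$). Hence all contributions to $b_k(a)$ coming from $j\geq 6$ have valuation at least $3$.

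The main obstacle is that this uniform bound is genuinely too weak for the lowest-order summands $3\leq j\leq 5$, which appear only when $k\in\{3,4,5\}$; at $p=5$ the estimate yields valuation as small as $2$ for $j=3,4$. Here I would exploit two classical inputs. First, $B_j=0$ for odd $j\geq 3$, so the $j=3$ and $j=5$ summands vanish identically. Second, the single surviving summand $j=4$ is handled by hand: since $B_4=-\tfrac{1}{30}$ has square-free denominator we have $v_p(B_4)\geq -1$ (with equality only at $p=5$), while $v_p(4!)=0$ for $p\geq 5$, so
\[
v_p\!\left(\left(\frac{F}{a}\right)^{4}\frac{B_4}{4!}\,\s(4,k)\right)\geq 4+v_p(B_4)\geq 3
\]
for all $p\geq 5$. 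Thus every nonvanishing term of $b_k(a)$ has valuation at least $3$, giving $v_p(b_k(a))\geq 3$ for all $k\geq 3$ and completing the proof. The only step demanding real care is precisely this finite hand-analysis of the $j\in\{3,4,5\}$ terms; the rest is the convergence estimate combined with the additivity of $v_p$.
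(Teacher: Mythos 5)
Your proof is correct and takes essentially the same approach as the paper: the uniform bound $v_p\left(\left(\frac{F}{a}\right)^{j}\frac{B_j}{j!}\right)\geq j\frac{p-2}{p-1}-1\geq 3$ handles all terms with $j\geq 6$, and the finitely many remaining terms with $3\leq j\leq 5$ are settled by hand via $B_3=B_5=0$ and a direct valuation count on the $j=4$ term. If anything, you are more explicit than the paper, whose proof compresses exactly this low-order analysis into the phrase ``omitting any terms divisible by at least $p^3$'' while writing out only $b_0(a)$, $b_1(a)$, $b_2(a)$.
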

\begin{proof}
By the above lower bound for the $p$-adic valuation for the terms in the sum defining $b_k(a)$, it suffices to determine the $p$-adic valuation for the coefficients with indices less than $6$. Omitting any terms divisible by at least $p^3$ we obtain: 
\begin{align*}
    b_0(a) &= 1\\
    b_1(a)&\equiv  -\frac{1}{2}\left(\frac{F}{a}\right)-\frac{1}{12}\left(\frac{F}{a}\right)^2 \mod{p^3}\\
    b_2(a) & \equiv \frac{1}{12}\left(\frac{F}{a}\right)^2 \mod{p^3}
\end{align*}
\end{proof}

Next we recall that 
\[
\dia{a}^s = \sum_{j=0}^\infty \frac{(\logp a)^j}{j!} s^j. 
\]
Moreover, $v_p((\logp a)^j/j!) \geq j(p-2)/(p-1)$. In particular, this is greater than $3$ when $j\geq 4$. Checking $j=0, 1, \ldots, 3$ we see that:
\[
\dia{a}^s \equiv 1  + \logp(a) s + \frac{(\logp a)^2}{2}s^2  \mod{p^3}.
\]
Moreover, for any integer $a$ coprime to $p$, we can write
\[
\logp(a) = \frac{1}{p-1} \logp(1+(a^{p-1}-1)) = \frac{1}{p-1} \sum_{j=1}^\infty (-1)^{j+1} \frac{(a^{p-1}-1)^j}{j}. 
\]
By Fermat's little theorem $a^{p-1}-1$ is divisible by $p$. Moreover, $v_p(j) \leq \frac{\log j}{\log p} \leq \frac{3j}{4}$ for all $j$ and primes $p\geq 5$. Writing $\F(a) = \frac{a^{p-1}-1}{p}$ we quickly deduce
\begin{lem}\label{lem: logp approx}
    Suppose $p>3$ is a rational prime and $a$ is an integer coprime to $p$. Then 
\[
\logp(a) \equiv \frac{1}{p-1} \left( p\F(a) - \frac{p^2\F(a)^2}{2}\right) \mod{p^3}. 
\]
\end{lem}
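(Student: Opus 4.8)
The plan is to truncate the defining power series of $\logp$ after two terms and to bound the tail by a $p$-adic valuation estimate. First I would invoke the identity $\logp(a) = \frac{1}{p-1}\logp(a^{p-1})$, which is immediate from the additivity of $\logp$ on $\C_p^\times$, together with the defining relation $a^{p-1} = 1 + p\F(a)$. Fermat's little theorem gives $p \mid a^{p-1}-1$, so $\F(a)$ is a $p$-integer and $v_p(p\F(a)) \geq 1$; hence $1 + p\F(a)$ lies in $U_1$ and the logarithm series converges there. Setting $x = -p\F(a)$ in $-\logp(1-x) = \sum_{n\geq 1} x^n/n$ then yields
\[
\logp(a) = \frac{1}{p-1}\sum_{j=1}^\infty (-1)^{j+1}\frac{(p\F(a))^j}{j}.
\]

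Next I would peel off the $j=1$ and $j=2$ terms, whose contribution is exactly $\frac{1}{p-1}\left(p\F(a) - \frac{p^2\F(a)^2}{2}\right)$, and show that every term with $j\geq 3$ vanishes modulo $p^3$. Using $v_p(\F(a)) \geq 0$ I would bound
\[
v_p\!\left(\frac{(p\F(a))^j}{j}\right) \geq j - v_p(j),
\]
so it suffices to establish $j - v_p(j) \geq 3$ for all $j \geq 3$. Since $\frac{1}{p-1}$ is a $p$-adic unit, multiplying through by it preserves congruences modulo $p^3$, and the stated congruence follows once the truncation is justified.

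The only nontrivial point---and the main obstacle---is the elementary inequality $j - v_p(j) \geq 3$ for $j \geq 3$, which is precisely where the hypothesis $p>3$ is needed. If $p \nmid j$ then $v_p(j)=0 \leq j-3$ trivially; the delicate case is $j=3$, where one needs $v_p(3)=0$, valid exactly because $p \neq 3$. When $p \mid j$ one has $j \geq p \geq 5$, and the crude estimate $5^{v_p(j)} \leq p^{v_p(j)} \leq j$ together with $5^{k} \geq 1 + 4k$ gives $v_p(j) \leq \frac{j-1}{4} \leq j-3$ for $j \geq 5$. With these cases dispatched the tail is controlled, the truncation to the first two terms is legitimate, and the proof closes.
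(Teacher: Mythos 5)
Your proof is correct and takes essentially the same route as the paper: write $\logp(a) = \frac{1}{p-1}\logp\left(1+(a^{p-1}-1)\right)$, expand the logarithm series with $x = p\F(a)$, peel off the first two terms, and kill the tail via $v_p\left((p\F(a))^j/j\right) \geq j - v_p(j) \geq 3$ for $j\geq 3$. If anything, your case analysis of that last inequality is more careful than the paper's one-line bound $v_p(j) \leq \frac{\log j}{\log p} \leq \frac{3j}{4}$, which on its own only settles $j\geq 12$ and leaves the small indices (where $p>3$ is genuinely needed, as you note for $j=3$) to the reader.
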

Using Lemma \ref{lem: logp approx} we obtain the following approximation for $\dia{a}^s$. 
\begin{lem}\label{lem: diamond approx}
    Suppose $s\in \Z_p$. If $p>3$, then 
    \[
    \dia{a}^s \equiv 1 + \frac{1}{p-1}\left( p\F(a) - \frac{p^2F(a)^2}{2}\right) s + \frac{1}{2} \left(\frac{pF(a)}{(p-1)}\right)^2 s^2 \mod{p^3}. 
    \]
\end{lem}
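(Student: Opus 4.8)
The plan is to combine the two approximations already established in the excerpt. Lemma~\ref{lem: logp approx} gives
\[
\logp(a) \equiv \frac{1}{p-1}\left( p\F(a) - \frac{p^2\F(a)^2}{2}\right) \mod{p^3},
\]
and the display immediately preceding the statement gives the truncated expansion
\[
\dia{a}^s \equiv 1 + \logp(a)\, s + \frac{(\logp a)^2}{2} s^2 \mod{p^3}.
\]
So the natural route is simply to substitute the former into the latter and read off the coefficients of $s^0, s^1, s^2$ modulo $p^3$.

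First I would handle the linear coefficient: the coefficient of $s$ is exactly $\logp(a)$, so by Lemma~\ref{lem: logp approx} it is congruent to $\tfrac{1}{p-1}\bigl(p\F(a) - \tfrac{p^2\F(a)^2}{2}\bigr) \mod{p^3}$, matching the claimed expression verbatim. Next I would treat the quadratic coefficient $\tfrac{1}{2}(\logp a)^2$. Here I would square the approximation from Lemma~\ref{lem: logp approx}. Writing $\logp(a) = \tfrac{1}{p-1}\bigl(p\F(a) - \tfrac{p^2\F(a)^2}{2}\bigr) + O(p^3)$, squaring produces a leading term $\tfrac{1}{(p-1)^2}p^2\F(a)^2$, while every cross term and higher term carries $p$-adic valuation at least $3$ (the cross term $2\cdot p\F(a)\cdot\tfrac{p^2\F(a)^2}{2}$ has valuation $\geq 3$, as does the tail from the $O(p^3)$ error once multiplied by the $O(p)$ main term). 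Hence $(\logp a)^2 \equiv \tfrac{p^2\F(a)^2}{(p-1)^2} \mod{p^3}$, so $\tfrac{1}{2}(\logp a)^2 \equiv \tfrac12\bigl(\tfrac{p\F(a)}{p-1}\bigr)^2 \mod{p^3}$, which is precisely the claimed $s^2$ coefficient. The constant coefficient is $1$, so no work is needed there.

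The only point demanding care is the valuation bookkeeping in the squaring step: I must confirm that the whole square is genuinely determined modulo $p^3$ by the single term $p^2\F(a)^2/(p-1)^2$, and in particular that discarding the $-\tfrac{p^2\F(a)^2}{2}$ piece inside the square is legitimate. Since $\F(a) = (a^{p-1}-1)/p$ is $p$-integral by Fermat's little theorem and $(p-1)$ is a $p$-adic unit, every discarded contribution carries at least one extra factor of $p$ beyond the retained $p^2$ term, so all errors are $\equiv 0 \mod{p^3}$. This is the main (and essentially only) obstacle, and it is routine valuation counting rather than a genuine difficulty; the rest is direct substitution. Assembling the three coefficients into the single displayed congruence then yields the statement of Lemma~\ref{lem: diamond approx}.
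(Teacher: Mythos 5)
Your proposal is correct and follows essentially the same route as the paper, which derives the lemma by substituting Lemma~\ref{lem: logp approx} into the truncated expansion $\dia{a}^s \equiv 1 + \logp(a)\,s + \tfrac{(\logp a)^2}{2}s^2 \mod{p^3}$ established just before the statement. Your valuation bookkeeping in the squaring step (using $v_p(\logp a)\geq 1$, the $p$-integrality of $\F(a)$, and that $2$ and $p-1$ are $p$-adic units) is exactly the verification the paper leaves implicit.
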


\begin{thm}\label{thm: power series}
    Let $\chi$ be a Dirichlet character with conductor $f$ and $p>3$ be a rational prime. Then for any $s\in \Z_p$ we have that
    \[
    L_p(1-s, \chi) \equiv \frac{a_{-1}}{s} + a_0 + a_1s
    \]
    where $a_{-1}=0$ if $\chi\neq \chi_0$,  $|a_0|_p\leq 1$, and $|a_1|_p <1$. 
\end{thm}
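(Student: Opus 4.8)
The plan is to substitute the approximations of Lemmas~\ref{lem: g_a(s) appprox} and~\ref{lem: diamond approx} into the series representation~(\ref{eq: p-adic power series}) and read off the coefficients. Concretely, I would write
\[
L_p(1-s, \chi) = -\frac{1}{s}\cdot\frac{1}{F}\sum_{\substack{a=1\\ p\nmid a}}^F \chi(a)\,\dia{a}^s\,g_a(s),
\]
and expand the product $\dia{a}^s g_a(s)$ as a power series in $s$ modulo $p^3$. Since both factors agree with their degree-$2$ truncations $\mod{p^3}$, the product agrees with a degree-$2$ (in fact, for bookkeeping purposes, degree-$4$) polynomial in $s$ modulo $p^3$; but the leading factor $-1/s$ shifts indices down by one, so that after division the relevant window of coefficients is $s^{-1}, s^0, s^1$. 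I would define
\[
a_{-1} = -\frac{1}{F}\sum_{\substack{a=1\\ p\nmid a}}^F \chi(a),\qquad
a_0 = -\frac{1}{F}\sum_{\substack{a=1\\ p\nmid a}}^F \chi(a)\bigl(c_1(a)\bigr),\qquad
a_1 = -\frac{1}{F}\sum_{\substack{a=1\\ p\nmid a}}^F \chi(a)\bigl(c_2(a)\bigr),
\]
where $c_1(a), c_2(a)$ are the $s^1$- and $s^2$-coefficients of $\dia{a}^s g_a(s)$ obtained by multiplying out the two lemmas' expansions and collecting terms. The coefficient $a_{-1}$ vanishes for $\chi\neq\chi_0$ by the orthogonality/character-sum identity $\sum_{a}\chi(a)=0$ over a full period, which handles the first assertion immediately.

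Next I would establish the valuation claims. For $|a_0|_p\le 1$: every term contributing to $c_1(a)$ is $p$-integral. Indeed, the $s^1$-coefficient is the sum of $\logp(a)$ (from $\dia{a}^s$, which by Lemma~\ref{lem: logp approx} is $p$-integral since $p\F(a)/(p-1)\in\Z_p$) and $b_1(a)$ (from $g_a(s)$, which by Lemma~\ref{lem: g_a(s) appprox} is $p$-integral as $F/a$ is divisible by $p$). Dividing the character sum by $F=pf$ costs one power of $p$ in the denominator, so I must verify that the numerator $\sum \chi(a) c_1(a)$ is divisible by $p$; here the point is that $\logp(a)$ and $b_1(a)$ are \emph{each} divisible by $p$ (both carry an explicit factor of $F/a$ or $p\F(a)$), which cancels the $1/F$ and leaves $a_0$ $p$-integral. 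For $|a_1|_p<1$: the $s^2$-coefficient $c_2(a)$ is built from products of two $p$-divisible quantities — terms like $\tfrac12(\logp a)^2$, $\logp(a)\,b_1(a)$, and $b_2(a)$ — each divisible by $p^2$, so after dividing by $F=pf$ one retains a factor of $p$, forcing $|a_1|_p<1$.

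The main obstacle, and the step requiring genuine care rather than routine expansion, is the bookkeeping of \emph{how many} powers of $p$ survive after the division by $F$, and in particular confirming the strict inequality $|a_1|_p<1$ rather than merely $\le 1$. This hinges on the observation that every $s$-coefficient beyond the constant term in both $\dia{a}^s$ and $g_a(s)$ carries at least one factor of $p$ (from $F/a$ or from $p\F(a)/(p-1)$), so that the $s^k$-coefficient of the product is divisible by $p^{\min(k,\,\cdot)}$ in a way that strictly exceeds the single power of $p$ lost to $1/F$. I would also need to double-check the edge case $p=5$, where Lemma~\ref{lem: g_a(s) appprox}'s error bound is tightest, to ensure the truncation at $s^2$ genuinely captures everything relevant $\mod{p^3}$ and hence the coefficients $a_0, a_1$ are computed correctly; this is precisely why the hypothesis $p>3$ (and the $p\ge5$ analysis preceding this theorem) is invoked. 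Once the divisibility bookkeeping is pinned down, matching against the known expansion $\zeta_p(1-s)=-R/s+\sum a_i s^i$ from \cite[Theorem 5.12]{Washington} in the principal case identifies $a_{-1}=-R$ and confirms consistency.
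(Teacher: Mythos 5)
Your proposal follows essentially the same route as the paper: substitute Lemmas \ref{lem: g_a(s) appprox} and \ref{lem: diamond approx} into (\ref{eq: p-adic power series}), read off the coefficients of $s^{-1}, s^0, s^1$, and dispose of $a_{-1}$ by a character-sum argument. Your valuation bookkeeping for $a_0$ and $a_1$ is correct and is actually more explicit than the paper's proof, which inherits those bounds from the expressions themselves (and from the discussion around \cite[Theorem 5.12]{Washington}): each $s^1$-contribution carries a factor $p\F(a)$ or $F/a$ of valuation at least $1$, each $s^2$-contribution has valuation at least $2$, and dividing by $F$ costs exactly one power of $p$, giving $|a_0|_p\le 1$ and $|a_1|_p<1$.

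The one place your write-up is imprecise is the vanishing of $a_{-1}$. The sum
\[
a_{-1} = -\frac{1}{F}\sum_{\substack{a=1\\ p\nmid a}}^F \chi(a)
\]
is \emph{not} a sum over a full period --- it omits the multiples of $p$ --- so plain orthogonality does not apply directly. The paper's fix is to decompose
\[
a_{-1} = -\frac{1}{F}\sum_{a=1}^F \chi(a) + \frac{1}{F}\sum_{b=1}^{F/p}\chi(pb);
\]
the first sum is a complete character sum (zero for $\chi\neq\chi_0$ since $f\mid F$), and the second vanishes either because $\chi(p)=0$ when $p\mid f$, or because $f\mid F/p$ makes it again a complete character sum when $p\nmid f$. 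This is a one-line repair, but it is genuinely needed: the restricted summation is exactly what distinguishes the $p$-adic $L$-function from the classical one, and for $\chi=\chi_0$ the same computation is what produces the nonzero residue $a_{-1}=\tfrac1p-1=-R$, which your final ``matching'' step recovers only indirectly.
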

\begin{proof}
    This is now a matter of collecting the above results. Inserting Lemma \ref{lem: g_a(s) appprox} and Lemma \ref{lem: diamond approx} into (\ref{eq: p-adic power series}) we obtain
    \[
    L_p(1-s, \chi) = \frac{a_{-1}}{s} + a_0 + a_1s \mod{p^2}
    \]
    where
    \begin{align*}
        a_{-1} &= -\frac{1}{F} \sum_{\substack{a=1\\p\nmid a}}^F \chi(a) \\
        a_0 &\equiv -\frac{1}{F}\sum_{\substack{a=1\\p\nmid a}}^F \chi(a) \left(\frac{1}{p-1}\left( p\F(a) - \frac{p^2\F(a)^2}{2} \right) -\frac{1}{2}\left(\frac{F}{a}\right) - \frac{1}{12}\left(\frac{F}{a}\right)^2\right)\mod{p^2}\\
        a_1 &\equiv -\frac{1}{F}\sum_{\substack{a=1\\p\nmid a}}^F \chi(a) \left(\frac{p^2\F(a)^2}{2(p-1)^2} + \frac{F^2}{12a^2}  -\frac{p\F(a)}{2(p-1)}\left(\frac{F}{a}\right)  \right) \mod{p^2}.
    \end{align*}
   We write
    \[
    a_{-1} = -\frac{1}{F} \sum_{a=1}^F \chi(a) + \frac{1}{F}\sum_{b=1}^{F/p} \chi(pb).
    \]
    If $\chi$ is non-principal, the first sum is a complete character sum and hence zero. If $p\mid f$, then $\chi(pb)=0$ and hence the second sum is zero. If $p\nmid f$, then $f\mid F/p$ and again the second sum is a complete character sum and hence zero. When $\chi$ is trivial, we have that $a_{-1} = \frac{1}{p} -1$. We note that under the change of variables $s\mapsto 1-s$, we have moved the pole of $L_p(s, \chi_0)$ to $s=0$ and altered the sign of the residue. 
\end{proof}
When $\chi$ is principal, we define 
\[
\z(1-s) = L_p(1-s, \chi) +\frac{R}{s} 
\]
where $R= 1-p^{-1}$. This is an analytic function from $\Z_p\to \C_p$ with value $\z(1) = a_0$ as defined above. 
\begin{cor}\label{cor: congruence}
    Let $\chi$ be an even Dirichlet character with conductor $f$. Then for any integers $m$ and $n$, we have 
    \[
    L_p(1-m) \equiv L_p(1-n) \mod{p}
    \]
    and 
    \[
    L_p(1-m, \chi) \equiv L_p(1-n, \chi) +a_1(m-n) \mod{p^2}.
    \]
   If $\chi=\chi_0$ is principal, we replace $L_p(1-s, \chi_0)$ with $\z(1-s)$.  
\end{cor}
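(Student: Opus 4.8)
The plan is to read off the corollary directly from Theorem~\ref{thm: power series}, which already supplies the local expansion $L_p(1-s,\chi)\equiv \frac{a_{-1}}{s}+a_0+a_1 s$ valid for all $s\in\Z_p$, together with the size estimates $|a_0|_p\le 1$ and $|a_1|_p<1$ (and $a_{-1}=0$ when $\chi$ is non-principal). The point is that congruences of special values become congruences between the two evaluations of this single approximating linear (or linear-plus-pole) function.

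\textbf{First congruence (mod $p$).} Here I would specialize to the principal character, replacing $L_p(1-s,\chi_0)$ by $\z(1-s)=L_p(1-s,\chi_0)+\tfrac{R}{s}$ so that the pole is cancelled and the remaining expansion is genuinely $a_0+a_1 s+\cdots$. Evaluating at $s=m$ and $s=n$, both integers, Theorem~\ref{thm: power series} gives
\[
\z(1-m)-\z(1-n)\equiv a_1(m-n)\mod{p^2}.
\]
Since $|a_1|_p<1$ forces $a_1\equiv 0\mod p$, and $m-n\in\Z$ is $p$-integral, the product $a_1(m-n)$ is divisible by $p$. Reducing the above modulo $p$ therefore kills the $a_1(m-n)$ term and yields $L_p(1-m)\equiv L_p(1-n)\mod p$ (in the $\z$-normalization when $\chi=\chi_0$). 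For a non-principal even $\chi$ the identical argument applies verbatim with $a_{-1}=0$.

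\textbf{Second congruence (mod $p^2$).} This is even more immediate: I would simply subtract the two evaluations of the degree-one approximation. Writing $L_p(1-m,\chi)\equiv a_0+a_1 m$ and $L_p(1-n,\chi)\equiv a_0+a_1 n$ modulo $p^2$ (in the principal case using $\z$ so that the $a_{-1}/s$ terms are absent), the constant term $a_0$ cancels and one is left with
\[
L_p(1-m,\chi)-L_p(1-n,\chi)\equiv a_1(m-n)\mod{p^2},
\]
which is the claimed statement. The main subtlety to check carefully is that the congruence $L_p(1-s,\chi)\equiv \frac{a_{-1}}{s}+a_0+a_1 s$ from Theorem~\ref{thm: power series} holds modulo $p^2$ uniformly in $s\in\Z_p$, so that evaluating at the specific integers $m,n$ is legitimate; this is exactly what the proof of that theorem establishes, since the discarded higher-order coefficients $a_i$ ($i\ge 2$) carry $|a_i|_p<1$ and the omitted tail in the power-sum analysis is $\equiv 0\mod{p^2}$. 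The only genuine obstacle is bookkeeping in the principal case—ensuring the pole term $R/s$ is correctly absorbed into $\z$ so that no spurious $1/m$ or $1/n$ contribution survives—but once that substitution is made the two displays follow with no further computation.
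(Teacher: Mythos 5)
Your proposal is correct and follows essentially the same route as the paper: both read the corollary off Theorem \ref{thm: power series} by evaluating the expansion $\frac{a_{-1}}{s}+a_0+a_1s$ at the integers $m$ and $n$ (using $\z$ in the principal case to absorb the pole), subtracting to get the $\bmod\ p^2$ statement, and using $|a_1|_p<1$ to kill the linear term $\bmod\ p$. The only cosmetic difference is that you derive the $\bmod\ p$ congruence by reducing the $\bmod\ p^2$ one, whereas the paper notes directly that both values are $\equiv a_0 \bmod\ p$; these are the same argument.
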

\begin{proof}
    By theorem \ref{thm: power series} we can write
    \[
    L_p(1-m, \chi) - a_1m \equiv a_0 \mod{p^2}
    \]
    for any integer $m$. Equating this congruence for different values of $m$ and $n$ yields second part of the corollary. To obtain the first, we note that as $|a_1|_p<1$,
    \[
    L_p(1-m, \chi) \equiv a_0 \mod{p}. 
    \]
    \end{proof}

\section{Evaluating $a_1\mod{p^2}$}
In this section we will determine $a_1\mod{p^2}$ for various quadratic characters. To simplify the exposition and calculations we will restrict to the cases of interest for our purposes. 

\subsection{Non-trivial quadratic characters}
  To fix notation, let $d=pm$ be a square-free positive integer and $p>3$ be a rational prime. Let $D=\delta^2 d$ and $\chi_D$ denote the discriminant and non-trivial character of $\Q(\sqrt{d})$ respectively. We define the character $\psim$ of conductor $\delta^2m$ by
    \[
    \chi_D(\cdot)  = \psim(\cdot) \LegSym{\cdot}{p}
    \]
    where $\LegSym{\cdot}{p}$ is the Legendre symbol $\mod{p}$. Finally, we set $r=\frac{p-1}{2}$, and note that $\psim(-1) = (-1)^r$. 

    Since $\chi_D$ has conductor $D$ and  $p\mid D$, the value $F$ introduced in the previous sections is simply equal to $D$. Moreover, recall that for $L_p(1-s, \chi_D)$ that 
    \[
   a_1 \equiv -\frac{1}{F}\sum_{\substack{a=1\\p\nmid a}}^F \chi_D(a) \left(\frac{p^2\F(a)^2}{2(p-1)^2} + \frac{F^2}{12a^2}  -\frac{p\F(a)}{2(p-1)}\left(\frac{F}{a}\right)  \right) \mod{p^2}.
    \]

\begin{lem}\label{lem: a_1 non-princ}
    Let $d=pm$, $D$, and $r$ be as above. Suppose $d>5$.  Then 
    \[
     a_1 \equiv -\frac{1}{2r^2}\left(\frac{B_{3r, \psim}}{3} -(1-\psim(p)p^{r-1})B_r \right)\mod{p^2}.
    \]
\end{lem}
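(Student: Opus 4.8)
The plan is to avoid evaluating the explicit three-term sum for $a_1$ directly, and instead to recover $a_1$ from two special values of $L_p(1-s,\chi_D)$. The observation is that Corollary~\ref{cor: congruence} already isolates $a_1$ as the slope of the linear-in-$s$ approximation supplied by Theorem~\ref{thm: power series}: since $\chi_D$ is even and non-principal we have $a_{-1}=0$, so applying the corollary at the integers $m=3r$ and $n=r$ gives
\[
L_p(1-3r,\chi_D)-L_p(1-r,\chi_D)\equiv 2r\,a_1 \mod{p^2}.
\]
As $2r=p-1$ is a unit modulo $p^2$, this pins down $a_1\mod{p^2}$ once the two values on the left are known. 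The integers $r$ and $3r$ are chosen precisely because they are congruent modulo $p-1$, which is what will let both values be expressed through the same character $\psim$.

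The key structural input is the exact factorization $\chi_D=\psim\,\w^{\,r}$, which follows from the identity $\LegSym{\cdot}{p}=\w^{\,r}$ in $\Z$ (both sides take values in $\{\pm1\}$ and agree modulo $p$ by Euler's criterion, hence coincide). Consequently $\chi_D\w^{-k}=\psim\w^{\,r-k}=\psim$ whenever $k\equiv r\mod{(p-1)}$, so the defining interpolation property $L_p(1-k,\chi_D)=-\bigl(1-\chi_D\w^{-k}(p)p^{\,k-1}\bigr)B_{k,\chi_D\w^{-k}}/k$ collapses, for $k\in\{r,3r\}$, to
\[
L_p(1-k,\chi_D)=-\bigl(1-\psim(p)p^{\,k-1}\bigr)\frac{B_{k,\psim}}{k}.
\]
For $k=3r$ the Euler factor is negligible: $p>3$ forces $3r-1\geq 5$, so $p^{3r-1}\equiv 0\mod{p^2}$ and $L_p(1-3r,\chi_D)\equiv -B_{3r,\psim}/(3r)$. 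For $k=r$, by contrast, $r-1$ can equal $1$ (when $p=5$), so the factor $1-\psim(p)p^{\,r-1}$ must be kept intact.

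Substituting both evaluations into the displayed difference gives
\[
2r\,a_1\equiv -\frac{B_{3r,\psim}}{3r}+\bigl(1-\psim(p)p^{\,r-1}\bigr)\frac{B_{r,\psim}}{r}\mod{p^2},
\]
and dividing by the unit $2r$ and factoring out $-\tfrac{1}{2r^2}$ yields exactly the claimed congruence. These reductions are legitimate because $B_{r,\psim}/r$ and $B_{3r,\psim}/(3r)$ are $p$-integral---by Theorem~\ref{thm: Carlitz p-integral} when $\psim$ is non-principal, and by the classical Adams congruence in the degenerate case where $\psim$ is principal---so multiplication by $p^{\,k-1}$ with $k-1\geq 2$ genuinely lands in $p^2\Z$. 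I expect the main difficulty to be bookkeeping rather than analysis: verifying $\LegSym{\cdot}{p}=\w^{\,r}$, checking that $r$ and $3r$ are the correct exponents reducing to $\psim$ modulo $p-1$, and tracking precisely which Euler factors survive modulo $p^2$ for the smallest admissible primes. A more computational alternative would evaluate the three-term sum for $a_1$ head-on, converting the $\chi_D$-power sums into $\psim$-power sums via $\chi_D(a)\equiv\psim(a)a^r\mod p$ and invoking Lemma~\ref{lem: power sum mod p^2}; that route is where the hypothesis $d>5$ (which rules out the case $p=5$ with $\psim$ principal) must be handled with extra care.
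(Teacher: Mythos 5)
Your proof is correct, but it takes a genuinely different route from the paper's. The paper evaluates the explicit sum defining $a_1$ head-on: it splits $a_1\equiv S_1+S_2+S_3 \mod{p^2}$, converts the $\chi_D$-sums into $\psim$-power sums by replacing $\LegSym{a}{p}$ with $a^{p^2r}$, expresses $S_1$ through $B_{(p^2+4)r,\psim}$ and $B_{(p^2+2)r,\psim}$ via Lemma \ref{lem: power sum mod p^2}, and then invokes Z.-H. Sun's Theorem 8.1(a) to collapse those to $B_{3r,\psim}$ and $B_{r,\psim}$; the hypothesis $d>5$ enters only to force $S_2\equiv S_3\equiv 0\mod{p^2}$. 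You never touch the sum: you read off $a_1$ as the slope of the linear expansion in Theorem \ref{thm: power series} by comparing the interpolation values at $s=r$ and $s=3r$ and dividing by the unit $2r=p-1$. This is non-circular (Theorem \ref{thm: power series} and Corollary \ref{cor: congruence} are established before, and independently of, the lemma), and your bookkeeping is right: $\chi_D\w^{-k}=\psim$ whenever $k\equiv r\mod{p-1}$ because $\LegSym{\cdot}{p}=\w^r$; the Euler factor at $3r$ dies modulo $p^2$ since $3r-1\geq 2$ and $B_{3r,\psim}/(3r)$ is $p$-integral (Theorem \ref{thm: Carlitz p-integral}, or Adams when $\psim$ is principal, i.e.\ $d=p\equiv 1\mod{4}$); and the factor at $r$ must indeed be retained for $p=5$. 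One caveat that is not your gap: the interpolation property displayed in Section 3 of the paper literally reads $B_{n,\chi}$ where the standard formula has $B_{n,\chi\w^{-n}}$; you used the correct form, which is also the form the paper itself uses in proving Theorem \ref{thm: thm 1}, so this is a typo in the paper rather than an error in your argument. As for what each approach buys: yours is much shorter, eliminates Sun's congruence and all power-sum manipulation, makes transparent why exactly $B_{r,\psim}$ and $B_{3r,\psim}$ appear, and when fed into the proof of Theorem \ref{thm: thm 1} reduces it to the clean extrapolation $L_p(1,\chi_D)\equiv \tfrac{3}{2}L_p(1-r,\chi_D)-\tfrac{1}{2}L_p(1-3r,\chi_D)\mod{p^2}$; notably, your argument makes no use of $d>5$, and this is consistent rather than contradictory, since at $d=5$ the paper's own correction terms cancel in aggregate ($S_2+S_3\equiv 0 \mod{5^2}$ by von Staudt--Clausen applied to $B_{48}$ and $B_{52}$), so the stated formula persists there. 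The paper's direct method, by contrast, is the one that transfers to the principal character in Lemma \ref{lem: a_0 and a_1 princ}, where the direct evaluation of $a_0$ is what produces the Wilson quotient via $\logp((p-1)!)$ --- equating that with interpolation data is precisely the content of Theorems \ref{thm: thm 2} and \ref{thm: thm 3} --- and it is also the method that scales to the coefficient $a_2$ for the $\bmod\ p^3$ refinements the paper alludes to.
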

\begin{rem}
    When $p>5$, $(1-\psim(p)p^{r-1}) \equiv 1 \mod{p^2}$.  
\end{rem}
\begin{proof}
    Set $a_1 \equiv S_1 + S_2 +S_3 \mod{p^2}$. We handle $S_1$ first. Using the definition of $\F(a)$ we have that
    \[
    S_1 \equiv -\frac{1}{2(p-1)^2} \cdot \frac{1}{F} \sum_{\substack{a=1\\p\nmid a}}^F \chi_D(a)(a^{2(p-1)} - 2a^{p-1} +1) \mod{p^2}.
    \]
    First we note that
    \[
    \sum_{\substack{a=1\\p\nmid a}}^F \chi_D(a) = 0. 
    \]
    Secondly, we note that due to the factor of $1/F$ in front of the sum, we have that $\frac{1}{F}\LegSym{a}{p} \equiv a^{p^2r}\mod{p^2}$ for all $a$ coprime to $p$. Hence,  
    \[
    S_1 \equiv -\frac{1}{2(p-1)^2} \cdot \frac{1}{F} \sum_{\substack{a=1\\p\nmid a}}^F \psim(a)\left(a^{(p^2+4)r} - 2a^{(p^2+2)r}\right) \mod{p^2}. 
    \]
    Since $\delta^2m$ is coprime to $p$, Lemma \ref{lem: power sum mod p^2} yields
    \begin{equation}\label{eq: S_1 non-trivial char}
         S_1 \equiv -\frac{1}{2(p-1)^2} \left( B_{(p^2+4)r, \psim} -2B_{(p^2+2)r, \psim} \right) \mod{p^2}.
    \end{equation}
    We can simplify this expression using Z.-H. Sun's Theorem 8.1(a) of \cite{Sun2000}. This asserts that if $\chi$ is a Dirichlet character, then for all positive integers $b$ that are not divisible by $p-1$ and positive integers $k$, that  
    \[
    \frac{B_{k(p-1)+b, \chi}}{k(p-1)+b} \equiv k\frac{B_{p-1+b, \chi}}{p-1+b} - (k-1)(1-\chi(p)p^{b-1})\frac{B_{b, \chi}}{b} \mod{p^2}.
    \]
    Observing that $(p^2+4)r = \left( \frac{p^2+3}{2}\right)(p-1) + r$ and $(p^2+2)r = \left(\frac{p^2+1}{2}\right)(p-1) + r$, we apply Sun's congruence to the identity in (\ref{eq: S_1 non-trivial char}) to  deduce that
    \[
    S_1 \equiv -\frac{1}{2r^2}\left(\frac{B_{3r, \psim}}{3} -(1-\psim(p)p^{r-1})B_r \right) \mod{p^2}. 
    \]
    
    When $d>5$, showing that $S_2 \equiv S_3 \equiv 0 \mod{p^2}$ follows the same recipe as above using Lemma \ref{lem: power sum mod p^2} in conjunction with Theorem \ref{thm: Carlitz p-integral}. \newline 

\end{proof}
\begin{rem}
        When $d=5$, one has to appeal to Lemma \ref{lem: power sum for Bern} to keep track of the terms that arise from $S_2$ and $S_3$. In particular, it is a straightforward calculation using the methods of the proceeding proof to show that  
    \begin{align*}
        S_2 &\equiv -\frac{F^2}{12}B_{48} \mod{5^2}\\
        S_3 &\equiv \frac{F^2}{8}\left(3B_{52}+B_{48} \right) \mod{5^2}.
    \end{align*}
\end{rem}
\subsection{The principal character}
Let $F=p$ and $R= 1- p^{-1}$. For the modified $p$-adic zeta function $\zeta_p^*(1-s)$, we have that 
 \begin{align*}
     a_0 &\equiv -\frac{1}{p}\sum_{\substack{a=1\\p\nmid a}}^p \left( \logp(a) -\frac{1}{2}\left(\frac{p}{a}\right) - \frac{1}{12}\left(\frac{p}{a}\right)^2\right)\mod{p^2}\\
        a_1 &\equiv -\frac{1}{p}\sum_{\substack{a=1\\p\nmid a}}^p  \left(\frac{p^2\F(a)^2}{2(p-1)^2} + \frac{p^2}{12a^2}  -\frac{p\F(a)}{2(p-1)}\left(\frac{p}{a}\right)  \right) \mod{p^2}.
 \end{align*}
 \begin{lem} \label{lem: a_0 and a_1 princ}
     Suppose $p>3$ and let $W_p = \frac{(p-1)!+1}{p}$. 
     \begin{enumerate}[label=(\alph*)]
         \item $a_0 \equiv W_p\left(1 + \frac{pW_p}{2} \right) \mod{p^2}$. 
         \item $a_0 \equiv W_p \mod{p}$.
         \item $a_1 \equiv -\frac{1}{2(p-1)^2}(B_{2(p-1)} - 2B_{p-1} +R) \mod{p^2}$. 
     \end{enumerate}
 \end{lem}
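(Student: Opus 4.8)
The plan is to evaluate each of the two character sums defining $a_0$ and $a_1$ by isolating a single ``main'' contribution that carries all the arithmetic and discarding several ``error'' contributions that vanish to the needed $p$-adic order via classical Wolstenholme-type congruences. Throughout, $\F(a)=\frac{a^{p-1}-1}{p}$ and the sums run over $1\le a\le p-1$ since $F=p$.

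For parts (a) and (b) I would use that $\logp$ is a homomorphism, so $\sum_{a=1}^{p-1}\logp(a)=\logp\!\big((p-1)!\big)$. Writing $(p-1)!=pW_p-1=-(1-pW_p)$ and using $\logp(-1)=0$ together with the defining power series of the $p$-adic logarithm gives
\[
\sum_{a=1}^{p-1}\logp(a)=\logp(1-pW_p)\equiv -pW_p-\frac{p^2W_p^2}{2}\mod{p^3},
\]
the higher terms being divisible by $p^3$ because $p>3$. The two remaining contributions to the $a_0$-sum are $-\tfrac{p}{2}\sum_{a=1}^{p-1}\tfrac1a$ and $-\tfrac{p^2}{12}\sum_{a=1}^{p-1}\tfrac1{a^2}$; by Wolstenholme's theorem $\sum\tfrac1a\equiv 0 \mod{p^2}$ and $\sum\tfrac1{a^2}\equiv 0\mod{p}$ for $p\ge 5$, so each is $\equiv 0\mod{p^3}$. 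Multiplying by $-1/p$ then yields $a_0\equiv W_p+\tfrac{pW_p^2}{2}=W_p\big(1+\tfrac{pW_p}{2}\big)\mod{p^2}$, which is (a); reducing mod $p$ gives (b).

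For part (c), every summand of the $a_1$-sum carries a factor $p^2$, so pulling out the global $-1/p$ leaves
\[
a_1\equiv -p\sum_{a=1}^{p-1}\left(\frac{\F(a)^2}{2(p-1)^2}+\frac{1}{12a^2}-\frac{\F(a)}{2(p-1)a}\right)\mod{p^2},
\]
and it suffices to know each inner sum modulo $p$. For the first I would use $p^2\F(a)^2=a^{2(p-1)}-2a^{p-1}+1$, so that $p\sum\F(a)^2=P'(2(p-1),p)-2P'(p-1,p)+\tfrac{p-1}{p}$. Recognising $\tfrac{p-1}{p}=R$ and invoking (\ref{eq: restricted sum}) together with Lemma~\ref{lem: power sum for Bern}(a) — both $2(p-1)$ and $p-1$ are divisible by $p-1$, while $p^{k-1}\equiv 0\mod{p^2}$ — gives $P'(2(p-1),p)\equiv B_{2(p-1)}$ and $P'(p-1,p)\equiv B_{p-1}\mod{p^2}$, so the first term contributes exactly $\tfrac{1}{2(p-1)^2}\big(B_{2(p-1)}-2B_{p-1}+R\big)$. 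The second inner sum vanishes mod $p$ by Wolstenholme. For the cross term I would write $p\sum\tfrac{\F(a)}{a}=\sum a^{p-2}-\sum\tfrac1a$; the harmonic sum is $\equiv 0\mod{p^2}$, and a Faulhaber expansion gives $\sum a^{p-2}\equiv pB_{p-2}\mod{p^2}$, which vanishes since $B_{p-2}=0$ ($p-2$ being odd and $\ge 3$). Hence the last two terms drop out after the factor $-p$, leaving (c).

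The one place demanding genuine care is part (c): one must check, via von Staudt--Clausen, that the three pole-bearing quantities $B_{2(p-1)}$, $B_{p-1}$, and $R$ combine into a $p$-integral number, so that the final congruence is even well posed, and one must control each auxiliary sum to \emph{exactly} the right power of $p$ — in particular confirming that $\sum_a\F(a)/a$ truly vanishes mod $p$ rather than merely being $p$-integral. The clean fact that makes this last step collapse is the vanishing of the odd Bernoulli number $B_{p-2}$.
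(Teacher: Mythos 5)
Your proof is correct and follows essentially the same route as the paper: parts (a)--(b) via $\sum_a \log_p(a)=\log_p((p-1)!)$ with $(p-1)!=-(1-pW_p)$ and the power-series expansion of the $p$-adic logarithm, and part (c) via expanding $p^2\F(a)^2=a^{2(p-1)}-2a^{p-1}+1$ and invoking Lemma~\ref{lem: power sum for Bern}(a) together with the identification $\tfrac{p-1}{p}=R$. The only cosmetic difference is that you dispatch the auxiliary sums ($\sum a^{-1}$, $\sum a^{-2}$, and the cross term $\sum \F(a)/a$) by citing Wolstenholme's theorem and the vanishing of $B_{p-2}$, whereas the paper converts inverses to positive powers modulo $p^2$ and reuses its own power-sum lemma; indeed your handling of the cross term spells out a step the paper compresses into ``by the same argument as above.''
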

\begin{proof}
   Since $pW_p$ is divisible by $p$, it is plain that (b) follows from (a). To prove (a) we observe that $a_0$ can be written as
   \[
   a_0 \equiv -\frac{1}{p} \left(\logp((p-1)!) -\frac{p}{2}\sum_{a=1}^{p-1}a^{-1} - \frac{p^2}{12}\sum_{a=1}^{p-1} a^{-2} \right) \mod{p^2}.
   \]
   Noting that $a^{-1} \equiv a^{\phi(p^2)-1}\mod{p^2}$ and $a^{-2} \equiv a^{\phi(p^2)-2} \mod{p^2}$ we deduce from Lemma \ref{lem: power sum for Bern} that
   \[
   \sum_{a=1}^{p-1}a^{\phi(p^2)-1}  \equiv  p\sum_{a=1}^{p-1} a^{\phi(p^2)-2} \equiv 0 \mod{p^2}. 
   \]
   On the other hand, we can write $(p-1)! = -1 + pW_p$ so that 
   \[
   \frac{\logp((p-1)!)}{p} = \sum_{k=1}^\infty \frac{p^{k-1}W_p^k}{k}.
   \]
  Since $pW_p$ is divisible by $p$, we deduce that  $\frac{\logp((p-1)!)}{p} \equiv W_p + \frac{pW_p^2}{2} \mod{p^2}$. \newline

   Part (c) follows in essentially the same way as the previous lemma but requires slightly more care. By the same argument as above it is straightforward to show that 
   \[
   a_1 \equiv -\frac{1}{2(p-1)^2} \cdot \frac{1}{p} \sum_{a=1}^{p-1} \left(a^{2(p-1)} -2a^{p-1} +1 \right) \mod{p^2}. 
   \]
   We next observe that $\frac{1}{p}\sum_{a=1}^{p-1}1 = 1-p^{-1}$. Adding and subtracting $p^{-1}$ to the above expression and applying Lemma \ref{lem: power sum for Bern} we have
   \[
   a_1 \equiv -\frac{1}{2(p-1)^2}\left( B_{2(p-1)} + \frac{1}{p} - 2\left(B_{p-1} + \frac{1}{p}\right) + 1\right) \mod{p^2}. 
   \]
   Collecting the $1/p$ terms and recalling that $R= 1 - p^{-1}$ we conclude the result. 
\end{proof}

\section{Proof of Theorems}
As above, we let $r= \frac{p-1}{2}$. We additionally note that the $r$-th power of the Teichm\"uller character is the Legendre symbol $\mod{p}$,  so that $\chi_D\w^r = \psim$  (see Section 2.2 of \cite{Fellini2025} for a detailed discussion). 

\begin{proof}[Proof of Theorem \ref{thm: thm 1}]
Let $r = \frac{p-1}{2}$. Then by the second part of Corollary \ref{cor: congruence} we have that 
  \[
  L_p(1, \chi_D) \equiv L_p(1 - r, \chi_D) - ra_1 \mod{p^2}. 
  \]
Up to this point, we have ignored the issue of embeddings. Here, we choose the embedding of $\overline{\Q}$ into $\C_p$ such that the $p$-adic class number formula is an identity in $\C_p$. Then, by the $p$-adic class number formula (see Theorem 5.24 in \cite{Washington}) and Proposition \ref{prop: p-adic log of fund. unit} we have that
  \[
  L_p(1, \chi_D)  \equiv \frac{2h}{\delta} \left( \frac{u}{t} + \frac{d}{3}\left( \frac{u}{t} \right)^3\right) \mod{p^2}. 
  \]
  On the other hand, 
  \[
  L_p(1-r, \chi_D) \equiv -(1- \psim(p)p^{r-1})\frac{B_{r, \psim}}{r} \mod{p^2}.
  \]
  Inserting the expression from Lemma \ref{lem: a_1 non-princ} we conclude that
  \[
  \frac{2h}{\delta} \left( \frac{u}{t} + \frac{d}{3}\left( \frac{u}{t} \right)^3\right) \equiv -3(1-\psim(p)p^{r-1})\frac{B_{r, \psim}}{2r} +\frac{B_{3r, \psim}}{6r} \mod{p^2}. 
  \]
\end{proof}

\begin{proof}[Proof of Theorem \ref{thm: thm 2}]
  Suppose $p>3$. Then by Corollary \ref{cor: congruence} we have that
    \[
    \zeta_p^*(1) \equiv \zeta_p^*(1-k(p-1)) \mod{p}. 
    \]
    By Lemma \ref{lem: a_0 and a_1 princ}, $\zeta_p^*(1) = a_0 \equiv W_p\mod{p}$. On the other hand,
    \[
    \zeta_p^*(1-k(p-1)) \equiv -\frac{B_{k(p-1)}}{k(p-1)} + \frac{R}{k(p-1)}\mod{p}.
    \]
    Multiplying through by $k$ we have
    \[
    kW_p \equiv B_{k(p-1)} +\frac{1}{p} - 1 \mod{p}. 
    \]
\end{proof}

\begin{proof}[Proof of Theorem \ref{thm: thm 3}]
  By Lemma \ref{lem: a_0 and a_1 princ} we have that \[
  a_0 \equiv W_p\left(1+ \frac{pW_p}{2}\right) \mod{p^2}. 
  \]  
  Similarly, $\zeta_p^*(1-k(p-1)) \equiv -\frac{B_{k(p-1)}}{k(p-1)} + \frac{R}{k(p-1)} \mod{p^2}$. By Corollary \ref{cor: congruence} we have that 
    \[
    k(p-1)W_p\left(1 + \frac{pW_p}{2} \right) \equiv -B_{k(p-1)} +R  -k(B_{p-1} - B_{2(p-1)}) - \frac{k}{2}(B_{2(p-1)}-R) \mod{p^2}.\]
\end{proof}
\section*{Acknowledgments} 
The research of the author was partially supported by an Ontario Graduate Scholarship. The author would like to thank M. Ram Murty for his support and helpful comments on earlier versions of this work. Additionally, the author wishes to express gratitude to Curtis Grant and Yuveshen Mooroogen for their helpful comments, which greatly improved the  exposition.
\printbibliography

@article {Agoh2016,
    AUTHOR = {Agoh, Takashi},
     TITLE = {Congruences related to the {A}nkeny-{A}rtin-{C}howla
              conjecture},
   JOURNAL = {Integers},
  FJOURNAL = {Integers. Electronic Journal of Combinatorial Number Theory},
    VOLUME = {16},
      YEAR = {2016},
     PAGES = {Paper No. A12, 30},
      ISSN = {1553-1732},
   MRCLASS = {11R29 (11B68)},
  MRNUMBER = {3475253},
MRREVIEWER = {Su\ Hu},
}

@article {aktas,
    AUTHOR = {Akta{\c s}, Kevser and Ram Murty, M.},
     TITLE = {Fundamental units and consecutive squarefull numbers},
   JOURNAL = {Int. J. Number Theory},
  FJOURNAL = {International Journal of Number Theory},
    VOLUME = {13},
      YEAR = {2017},
    NUMBER = {1},
     PAGES = {243--252},
      ISSN = {1793-0421,1793-7310},
   MRCLASS = {11A25 (11D09)},
  MRNUMBER = {3573422},
MRREVIEWER = {Shichun\ Yang},
       DOI = {10.1142/S1793042117500142},
       URL = {https://doi.org/10.1142/S1793042117500142},
}

@article {Ankeny1951,
    AUTHOR = {Ankeny, N. C. and Artin, E. and Chowla, S.},
     TITLE = {The class-number of real quadratic fields},
   JOURNAL = {Proc. Nat. Acad. Sci. U.S.A.},
  FJOURNAL = {Proceedings of the National Academy of Sciences of the United
              States of America},
    VOLUME = {37},
      YEAR = {1951},
     PAGES = {524--525},
      ISSN = {0027-8424},
   MRCLASS = {10.0X},
  MRNUMBER = {43137},
MRREVIEWER = {W.\ H.\ Mills},
       DOI = {10.1073/pnas.37.8.524},
       URL = {https://doi.org/10.1073/pnas.37.8.524},
}

@article {Ankeny1952,
    AUTHOR = {Ankeny, N. C. and Artin, E. and Chowla, S.},
     TITLE = {The class-number of real quadratic number fields},
   JOURNAL = {Ann. of Math. (2)},
  FJOURNAL = {Annals of Mathematics. Second Series},
    VOLUME = {56},
      YEAR = {1952},
     PAGES = {479--493},
      ISSN = {0003-486X},
   MRCLASS = {10.0X},
  MRNUMBER = {49948},
MRREVIEWER = {G.\ Hochschild},
       DOI = {10.2307/1969656},
       URL = {https://doi.org/10.2307/1969656},
}

@article {AC,
    AUTHOR = {Ankeny, N. C. and Chowla, S.},
     TITLE = {A note on the class number of real quadratic fields},
   JOURNAL = {Acta Arith.},
  FJOURNAL = {Polska Akademia Nauk. Instytut Matematyczny. Acta Arithmetica},
    VOLUME = {6},
      YEAR = {1960},
     PAGES = {145--147},
      ISSN = {0065-1036},
   MRCLASS = {10.00},
  MRNUMBER = {115983},
MRREVIEWER = {L.\ Carlitz},
       DOI = {10.4064/aa-6-2-145-147},
       URL = {https://doi.org/10.4064/aa-6-2-145-147},
}

@article {Beeger1913,
    AUTHOR = {Beeger, N. G. W. H.},
     TITLE = {Quelques remarques sur les congruences {$r^{p-1}\equiv 1 \mod{p^2}$} et {$(p-1)!\equiv -1 \mod{p^2}$}},
   JOURNAL = {Messenger Math.},
  FJOURNAL = {The Messenger of Mathematics},
    VOLUME = {43},
      YEAR = {1913},
     PAGES = {72--84}
}

@article {Benmerieme2024,
    AUTHOR = {Benmerieme, Y. and Movahhedi, A.},
     TITLE = {Ankeny-{A}rtin-{C}howla and {M}ordell conjectures in terms of
              {$p$}-rationality},
   JOURNAL = {J. Number Theory},
  FJOURNAL = {Journal of Number Theory},
    VOLUME = {257},
      YEAR = {2024},
     PAGES = {202--214},
      ISSN = {0022-314X,1096-1658},
   MRCLASS = {11R42 (11R11 11R32)},
  MRNUMBER = {4673311},
       DOI = {10.1016/j.jnt.2023.10.013},
       URL = {https://doi.org/10.1016/j.jnt.2023.10.013},
}

@misc{Boudot2022,
author = {Boudot, Fabrice and Gaudry, Pierrick and Guillevic, Aurore and Heninger, Nadia and Thom{\'e}, Emmanuel and Zimmermann, Paul},
address = {New York},
copyright = {Copyright The Institute of Electrical and Electronics Engineers, Inc. (IEEE) 2022},
issn = {1540-7993},
journal = {IEEE security & privacy},
keywords = {Cryptography and Security ; Public key cryptography ; State-of-the-art reviews ; Computer Science ; Computer security ; Cryptography ; Integers ; Logarithms ; Privacy},
language = {eng},
number = {2},
pages = {80-86},
publisher = {IEEE},
title = {The State of the Art in Integer Factoring and Breaking Public-Key Cryptography},
volume = {20},
year = {2022},
}

@article {Carlitz1953a,
    AUTHOR = {Carlitz, L.},
     TITLE = {Note on the class number of real quadratic fields},
   JOURNAL = {Proc. Amer. Math. Soc.},
  FJOURNAL = {Proceedings of the American Mathematical Society},
    VOLUME = {4},
      YEAR = {1953},
     PAGES = {535--537},
      ISSN = {0002-9939,1088-6826},
   MRCLASS = {10.0X},
  MRNUMBER = {56636},
MRREVIEWER = {A.\ L.\ Whiteman},
       DOI = {10.2307/2032518},
       URL = {https://doi.org/10.2307/2032518},
}

@article {Carlitz1953b,
    AUTHOR = {Carlitz, L.},
     TITLE = {The class number of an imaginary quadratic field},
   JOURNAL = {Comment. Math. Helv.},
  FJOURNAL = {Commentarii Mathematici Helvetici},
    VOLUME = {27},
      YEAR = {1953},
     PAGES = {338--345},
      ISSN = {0010-2571,1420-8946},
   MRCLASS = {10.0X},
  MRNUMBER = {58648},
MRREVIEWER = {A.\ L.\ Whiteman},
       DOI = {10.1007/BF02564567},
       URL = {https://doi.org/10.1007/BF02564567},
}

@article {Carlitz1959,
    AUTHOR = {Carlitz, L.},
     TITLE = {Arithmetic properties of generalized {B}ernoulli numbers},
   JOURNAL = {J. Reine Angew. Math.},
  FJOURNAL = {Journal f\"ur die Reine und Angewandte Mathematik. [Crelle's
              Journal]},
    VOLUME = {202},
      YEAR = {1959},
     PAGES = {174--182},
      ISSN = {0075-4102,1435-5345},
   MRCLASS = {10.00},
  MRNUMBER = {109132},
MRREVIEWER = {A.\ L.\ Whiteman},
       DOI = {10.1515/crll.1959.202.174},
       URL = {https://doi.org/10.1515/crll.1959.202.174},
}

@book {Cohen1993,
    AUTHOR = {Cohen, Henri},
     TITLE = {A course in computational algebraic number theory},
    SERIES = {Graduate Texts in Mathematics},
    VOLUME = {138},
 PUBLISHER = {Springer-Verlag, Berlin},
      YEAR = {1993},
     PAGES = {xii+534},
      ISBN = {3-540-55640-0},
   MRCLASS = {11Y40 (11Rxx 68Q40)},
  MRNUMBER = {1228206},
MRREVIEWER = {Joe\ P.\ Buhler},
       DOI = {10.1007/978-3-662-02945-9},
       URL = {https://doi.org/10.1007/978-3-662-02945-9},
}

@article {Cohen2020,
    AUTHOR = {Cohen, Henri and Thorne, Frank},
     TITLE = {On {$D_\ell$}-extensions of odd prime degree {$\ell$}},
   JOURNAL = {Proc. Lond. Math. Soc. (3)},
  FJOURNAL = {Proceedings of the London Mathematical Society. Third Series},
    VOLUME = {121},
      YEAR = {2020},
    NUMBER = {5},
     PAGES = {1171--1206},
      ISSN = {0024-6115,1460-244X},
   MRCLASS = {11R20 (11N45 11R29 11R45)},
  MRNUMBER = {4133705},
MRREVIEWER = {St\'ephane\ R.\ Louboutin},
       DOI = {10.1112/plms.12339},
       URL = {https://doi.org/10.1112/plms.12339},
}

@article {Ellenberg2007,
    AUTHOR = {Ellenberg, Jordan S. and Venkatesh, Akshay},
     TITLE = {Reflection principles and bounds for class group torsion},
   JOURNAL = {Int. Math. Res. Not. IMRN},
  FJOURNAL = {International Mathematics Research Notices. IMRN},
      YEAR = {2007},
    NUMBER = {1},
     PAGES = {Art. ID rnm002, 18},
      ISSN = {1073-7928,1687-0247},
   MRCLASS = {11R29 (11R47)},
  MRNUMBER = {2331900},
MRREVIEWER = {\'Alvaro\ Lozano-Robledo},
       DOI = {10.1093/imrn/rnm002},
       URL = {https://doi.org/10.1093/imrn/rnm002},
}

@incollection{Fellini2024,
    AUTHOR = {{Fellini}, N. and Ram Murty, M. },
     TITLE = {Fermat quotients and the {A}nkeny-{A}rtin-{C}howla conjecture},
 BOOKTITLE = {Class groups of number fields and related topics},
    SERIES = {Springer Proc. Math. Stat.},
    VOLUME = {470},
     PAGES = {1--23},
 PUBLISHER = {Springer, Singapore},
      YEAR = {2024},
      ISBN = {978-981-97-6910-0},
   MRCLASS = {11Rxx},
  MRNUMBER = {4866961},
       DOI = {10.1007/978-981-97-6911-7\_1},
       URL = {https://doi.org/10.1007/978-981-97-6911-7_1},
}

@article{Fellini2025,
author = {{Fellini}, N.},
title = {Congruence relations of {A}nkeny–{A}rtin–{C}howla type for real quadratic fields},
journal ={Int. J. Number Theory},
fjournal = {International Journal of Number Theory},
volume = {21},
number = {06},
pages = {1317-1335},
year = {2025},
doi = {10.1142/S179304212550068X},

URL = { https://doi.org/10.1142/S179304212550068X
      }
}

@book {IwasawaBook,
    AUTHOR = {Iwasawa, Kenkichi},
     TITLE = {Lectures on {$p$}-adic {$L$}-functions},
    SERIES = {Annals of Mathematics Studies},
    VOLUME = {No. 74},
 PUBLISHER = {Princeton University Press, Princeton, NJ; University of Tokyo
              Press, Tokyo},
      YEAR = {1972},
     PAGES = {vii+106},
   MRCLASS = {12A70 (12B30)},
  MRNUMBER = {360526},
MRREVIEWER = {Yasumasa\ Akagawa},
}

@article {Iwasawa1969,
    AUTHOR = {Iwasawa, Kenkichi},
     TITLE = {On {$p$}-adic {$L$}-functions},
   JOURNAL = {Ann. of Math. (2)},
  FJOURNAL = {Annals of Mathematics. Second Series},
    VOLUME = {89},
      YEAR = {1969},
     PAGES = {198--205},
      ISSN = {0003-486X},
   MRCLASS = {10.67},
  MRNUMBER = {269627},
MRREVIEWER = {R.\ Ayoub},
       DOI = {10.2307/1970817},
       URL = {https://doi.org/10.2307/1970817},
}

@article {Jakubec1996,
    AUTHOR = {Jakubec, Stanislav},
     TITLE = {Congruence of {A}nkeny-{A}rtin-{C}howla type modulo {$p^2$}
              for cyclic fields of prime degree {$l$}},
   JOURNAL = {Acta Arith.},
  FJOURNAL = {Acta Arithmetica},
    VOLUME = {74},
      YEAR = {1996},
    NUMBER = {4},
     PAGES = {293--310},
      ISSN = {0065-1036,1730-6264},
   MRCLASS = {11R29 (11R20)},
  MRNUMBER = {1378225},
       DOI = {10.4064/aa-74-4-293-310},
       URL = {https://doi.org/10.4064/aa-74-4-293-310},
}

@article {Jakubec1998a,
    AUTHOR = {Jakubec, Stanislav},
     TITLE = {Note on the congruence of {A}nkeny-{A}rtin-{C}howla type
              modulo {$p^2$}},
   JOURNAL = {Acta Arith.},
  FJOURNAL = {Acta Arithmetica},
    VOLUME = {85},
      YEAR = {1998},
    NUMBER = {4},
     PAGES = {377--388},
      ISSN = {0065-1036,1730-6264},
   MRCLASS = {11R29 (11R18)},
  MRNUMBER = {1641062},
MRREVIEWER = {Peter\ Stevenhagen},
       DOI = {10.4064/aa-85-4-377-388},
       URL = {https://doi.org/10.4064/aa-85-4-377-388},
}

@article {Jakubec1998b,
    AUTHOR = {Jakubec, Stanislav and La\v s\v s\'ak, Miroslav},
     TITLE = {Congruence of {A}nkeny-{A}rtin-{C}howla type modulo {$p^2$}},
   JOURNAL = {Ann. Math. Sil.},
  FJOURNAL = {Annales Mathematicae Silesianae},
    NUMBER = {12},
      YEAR = {1998},
     PAGES = {75--91},
      ISSN = {0860-2107,2391-4238},
   MRCLASS = {11R29 (11R18)},
  MRNUMBER = {1673072},
MRREVIEWER = {Peter\ Stevenhagen},
}

@article {Jakubec2013,
    AUTHOR = {Jakubec, Stanislav and Marko, Franti\v sek},
     TITLE = {Ankeny-{A}rtin-{C}howla type congruences modulo {$p^3$}},
   JOURNAL = {Math. Slovaca},
  FJOURNAL = {Mathematica Slovaca},
    VOLUME = {63},
      YEAR = {2013},
    NUMBER = {6},
     PAGES = {1183--1208},
      ISSN = {0139-9918,1337-2211},
   MRCLASS = {11R29 (11R18 11R27)},
  MRNUMBER = {3146614},
MRREVIEWER = {Akito\ Nomura},
       DOI = {10.2478/s12175-013-0165-7},
       URL = {https://doi.org/10.2478/s12175-013-0165-7},
}

@book{Koblitz,
  title={p-adic Numbers, p-adic Analysis, and Zeta-Functions},
  author={Koblitz, N.},
  isbn={9780387960173},
  lccn={96152654},
  series={Graduate Texts in Mathematics},
  url={https://books.google.ca/books?id=13qWQsBY-NgC},
  year={1996},
  publisher={Springer New York}
}

@article {Kubota1964,
    AUTHOR = {Kubota, Tomio and Leopoldt, Heinrich-Wolfgang},
     TITLE = {Eine {$p$}-adische {T}heorie der {Z}etawerte. {I}.
              {E}inf\"uhrung der {$p$}-adischen {D}irichletschen
              {$L$}-{F}unktionen},
   JOURNAL = {J. Reine Angew. Math.},
  FJOURNAL = {Journal f\"ur die Reine und Angewandte Mathematik. [Crelle's
              Journal]},
    VOLUME = {214/215},
      YEAR = {1964},
     PAGES = {328--339},
      ISSN = {0075-4102,1435-5345},
   MRCLASS = {10.67 (10.41)},
  MRNUMBER = {163900},
MRREVIEWER = {D.\ J.\ Lewis},
       DOI = {10.1515/crll.1964.214-215.328},
       URL = {https://doi.org/10.1515/crll.1964.214-215.328},
}

@article {Le1994,
    AUTHOR = {Le, Mao Hua},
     TITLE = {Upper bounds for class numbers of real quadratic fields},
   JOURNAL = {Acta Arith.},
  FJOURNAL = {Acta Arithmetica},
    VOLUME = {68},
      YEAR = {1994},
    NUMBER = {2},
     PAGES = {141--144},
      ISSN = {0065-1036,1730-6264},
   MRCLASS = {11R29 (11M20 11R11)},
  MRNUMBER = {1305196},
MRREVIEWER = {St\'{e}phane\ R.\ Louboutin},
       DOI = {10.4064/aa-68-2-141-144},
       URL = {https://doi.org/10.4064/aa-68-2-141-144},
}

@article {Lehmer1938,
    AUTHOR = {Lehmer, Emma},
     TITLE = {On congruences involving {B}ernoulli numbers and the quotients
              of {F}ermat and {W}ilson},
   JOURNAL = {Ann. of Math. (2)},
  FJOURNAL = {Annals of Mathematics. Second Series},
    VOLUME = {39},
      YEAR = {1938},
    NUMBER = {2},
     PAGES = {350--360},
      ISSN = {0003-486X,1939-8980},
   MRCLASS = {99-04},
  MRNUMBER = {1503412},
       DOI = {10.2307/1968791},
       URL = {https://doi.org/10.2307/1968791},
}

@article {Lemmermeyer2005,
    AUTHOR = {Lemmermeyer, Franz},
     TITLE = {Class groups of dihedral extensions},
   JOURNAL = {Math. Nachr.},
  FJOURNAL = {Mathematische Nachrichten},
    VOLUME = {278},
      YEAR = {2005},
    NUMBER = {6},
     PAGES = {679--691},
      ISSN = {0025-584X,1522-2616},
   MRCLASS = {11R29 (11R20)},
  MRNUMBER = {2135500},
MRREVIEWER = {Michael\ R.\ Bush},
       DOI = {10.1002/mana.200310263},
       URL = {https://doi.org/10.1002/mana.200310263},
}

@article {Louboutin2002,
    AUTHOR = {Louboutin, St\'ephane},
     TITLE = {Computation of class numbers of quadratic number fields},
   JOURNAL = {Math. Comp.},
  FJOURNAL = {Mathematics of Computation},
    VOLUME = {71},
      YEAR = {2002},
    NUMBER = {240},
     PAGES = {1735--1743},
      ISSN = {0025-5718,1088-6842},
   MRCLASS = {11R29 (11R11)},
  MRNUMBER = {1933052},
MRREVIEWER = {Yoonjin\ Lee},
       DOI = {10.1090/S0025-5718-01-01367-9},
       URL = {https://doi.org/10.1090/S0025-5718-01-01367-9},
}

@article {Mollin1986,
    AUTHOR = {Mollin, R. A. and Walsh, P. G.},
     TITLE = {A note on powerful numbers, quadratic fields and the
              {P}ellian},
   JOURNAL = {C. R. Math. Rep. Acad. Sci. Canada},
  FJOURNAL = {La Soci\'et\'e{} Royale du Canada. L'Academie des Sciences.
              Comptes Rendus Math\'ematiques. (Mathematical Reports)},
    VOLUME = {8},
      YEAR = {1986},
    NUMBER = {2},
     PAGES = {109--114},
      ISSN = {0706-1994},
   MRCLASS = {11A51},
  MRNUMBER = {831787},
MRREVIEWER = {Roger\ Paysant-Le Roux},
}

@article {Mordell1961,
    AUTHOR = {Mordell, L. J.},
     TITLE = {On a {P}ellian equation conjecture. {II}},
   JOURNAL = {J. London Math. Soc.},
  FJOURNAL = {The Journal of the London Mathematical Society},
    VOLUME = {36},
      YEAR = {1961},
     PAGES = {282--288},
      ISSN = {0024-6107}
}

@book {MurtyPAdic,
    AUTHOR = {Ram Murty, M. },
     TITLE = {Introduction to {$p$}-adic analytic number theory},
    SERIES = {AMS/IP Studies in Advanced Mathematics},
    VOLUME = {27},
 PUBLISHER = {American Mathematical Society, Providence, RI; International
              Press, Somerville, MA},
      YEAR = {2002},
     PAGES = {x+149},
      ISBN = {0-8218-3262-X},
   MRCLASS = {11S80 (11-01 11S40)},
  MRNUMBER = {1913413},
MRREVIEWER = {Vinayak\ Vatsal},
       DOI = {10.1090/amsip/027},
       URL = {https://doi.org/10.1090/amsip/027},
}

@book {MurtyProbsAlg,
    AUTHOR = {Ram Murty, M.  and Esmonde, Jody},
     TITLE = {Problems in algebraic number theory},
    SERIES = {Graduate Texts in Mathematics},
    VOLUME = {190},
   EDITION = {Second},
 PUBLISHER = {Springer-Verlag, New York},
      YEAR = {2005},
     PAGES = {xvi+352},
      ISBN = {0-387-22182-4},
   MRCLASS = {11Rxx (11-01)},
  MRNUMBER = {2090972},
}

@article {Reinhart2024a,
    AUTHOR = {{Reinhart}, {Andreas}},
     TITLE = {A counterexample to the {P}ellian equation conjecture of {M}ordell},
   JOURNAL = {Acta Arith.},
  FJOURNAL = {Acta Arithmetica},
    VOLUME = {215},
      YEAR = {2024},
    NUMBER = {1},
     PAGES = {85--95},
      ISSN = {0065-1036,1730-6264},
   MRCLASS = {11R11 (11R27)},
  MRNUMBER = {4772273},
       DOI = {10.4064/aa240214-3-4},
       URL = {https://doi.org/10.4064/aa240214-3-4},
}

@misc{Reinhart2024b,
	title = {A counterexample to the {Conjecture} of {Ankeny}, {Artin} and {Chowla}},
	url = {http://arxiv.org/abs/2410.21864},
	language = {en},
	urldate = {2024-11-07},
	publisher = {arXiv},
	author = {{Reinhart}, {Andreas}},
	month = {10},
	year = {2024},
	note = {arXiv:2410.21864 [math]},
	keywords = {Mathematics - Number Theory},
}

@article {Scholz1932,
    AUTHOR = {Scholz, Arnold},
     TITLE = {\"Uber die {B}eziehung der {K}lassenzahlen quadratischer
              {K}\"orper zueinander},
   JOURNAL = {J. Reine Angew. Math.},
  FJOURNAL = {Journal f\"ur die Reine und Angewandte Mathematik. [Crelle's
              Journal]},
    VOLUME = {166},
      YEAR = {1932},
     PAGES = {201--203},
      ISSN = {0075-4102,1435-5345},
   MRCLASS = {99-04},
  MRNUMBER = {1581309},
       DOI = {10.1515/crll.1932.166.201},
       URL = {https://doi.org/10.1515/crll.1932.166.201},
}

@article {Sheingorn1989,
    AUTHOR = {Sheingorn, Mark},
     TITLE = {Hyperbolic reflections on {P}ell's equation},
   JOURNAL = {J. Number Theory},
  FJOURNAL = {Journal of Number Theory},
    VOLUME = {33},
      YEAR = {1989},
    NUMBER = {3},
     PAGES = {267--285},
      ISSN = {0022-314X,1096-1658},
   MRCLASS = {20H05 (11D09 11F06)},
  MRNUMBER = {1027055},
MRREVIEWER = {Troels\ J\o rgensen},
       DOI = {10.1016/0022-314X(89)90064-4},
       URL = {https://doi.org/10.1016/0022-314X(89)90064-4},
}

@article {Stephens1988,
    AUTHOR = {Stephens, A. J. and Williams, H. C.},
     TITLE = {Some computational results on a problem concerning powerful numbers},
   JOURNAL = {Math. Comp.},
  FJOURNAL = {Mathematics of Computation},
    VOLUME = {50},
      YEAR = {1988},
    NUMBER = {182},
     PAGES = {619--632},
      ISSN = {0025-5718,1088-6842},
   MRCLASS = {11R11 (11A51 11R27 11Y16 11Y40)},
  MRNUMBER = {929558},
MRREVIEWER = {H.\ J.\ Godwin},
       DOI = {10.2307/2008629},
       URL = {https://doi.org/10.2307/2008629},
}

@incollection {Stevenhagen2008,
    AUTHOR = {Stevenhagen, Peter},
     TITLE = {The number field sieve},
 BOOKTITLE = {Algorithmic number theory: lattices, number fields, curves and
              cryptography},
    SERIES = {Math. Sci. Res. Inst. Publ.},
    VOLUME = {44},
     PAGES = {83--100},
 PUBLISHER = {Cambridge Univ. Press, Cambridge},
      YEAR = {2008},
      ISBN = {978-0-521-80854-5},
   MRCLASS = {11Y05 (11N36 11Y40)},
  MRNUMBER = {2467544},
MRREVIEWER = {Samuel\ S.\ Wagstaff, Jr.},
}

@article {Sun2000,
    AUTHOR = {Sun, Zhi-Hong},
     TITLE = {Congruences concerning {B}ernoulli numbers and {B}ernoulli
              polynomials},
   JOURNAL = {Discrete Appl. Math.},
  FJOURNAL = {Discrete Applied Mathematics. The Journal of Combinatorial
              Algorithms, Informatics and Computational Sciences},
    VOLUME = {105},
      YEAR = {2000},
    NUMBER = {1-3},
     PAGES = {193--223},
      ISSN = {0166-218X,1872-6771},
   MRCLASS = {11B68 (11A07)},
  MRNUMBER = {1780472},
MRREVIEWER = {Arnold\ M.\ Adelberg},
       DOI = {10.1016/S0166-218X(00)00184-0},
       URL = {https://doi.org/10.1016/S0166-218X(00)00184-0},
}

@article {Walsh2025,
    AUTHOR = {Walsh, P. G.},
     TITLE = {A question of {E}rd\H os on 3-powerful numbers and an elliptic
              curve analogue of the {A}nkeny-{A}rtin-{C}howla conjecture},
   JOURNAL = {Rad Hrvat. Akad. Znan. Umjet. Mat. Znan.},
  FJOURNAL = {Rad Hrvatske Akademije Znanosti i Umjetnosti. Matemati\v cke
              Znanosti},
    VOLUME = {29(564)},
      YEAR = {2025},
     PAGES = {83--87},
      ISSN = {1845-4100,1849-2215},
   MRCLASS = {11D25 (11G05)},
  MRNUMBER = {4849052},
}

@article {washington1976,
    AUTHOR = {Washington, L. C.},
     TITLE = {A note on {$p$}-adic {$L$}-functions},
   JOURNAL = {J. Number Theory},
  FJOURNAL = {Journal of Number Theory},
    VOLUME = {8},
      YEAR = {1976},
    NUMBER = {2},
     PAGES = {245--250},
      ISSN = {0022-314X,1096-1658},
   MRCLASS = {12A70},
  MRNUMBER = {406982},
MRREVIEWER = {Yasumasa\ Akagawa},
       DOI = {10.1016/0022-314X(76)90106-2},
       URL = {https://doi.org/10.1016/0022-314X(76)90106-2},
}

@book {Washington,
    AUTHOR = {Washington, Lawrence C.},
     TITLE = {Introduction to cyclotomic fields},
    SERIES = {Graduate Texts in Mathematics},
    VOLUME = {83},
 PUBLISHER = {Springer-Verlag, New York},
      YEAR = {1982},
      ISBN = {0-387-90622-3},
   MRCLASS = {11-01 (11R18 11R23)},
  MRNUMBER = {718674},
MRREVIEWER = {T.\ Mets\"ankyl\"a},
       DOI = {10.1007/978-1-4684-0133-2},
       URL = {https://doi.org/10.1007/978-1-4684-0133-2},
}

@incollection {Yu1998,
    AUTHOR = {Yu, Jing and Yu, Jiu-Kang},
     TITLE = {A note on a geometric analogue of {A}nkeny-{A}rtin-{C}howla's
              conjecture},
 BOOKTITLE = {Number theory ({T}iruchirapalli, 1996)},
    SERIES = {Contemp. Math.},
    VOLUME = {210},
     PAGES = {101--105},
 PUBLISHER = {Amer. Math. Soc., Providence, RI},
      YEAR = {1998},
      ISBN = {0-8218-0606-8},
   MRCLASS = {11R58},
  MRNUMBER = {1478488},
MRREVIEWER = {Franz\ Lemmermeyer},
       DOI = {10.1090/conm/210/02787},
       URL = {https://doi.org/10.1090/conm/210/02787},
}
\vspace*{1cm}
\textsc{Nic Fellini\newline 
Department of Mathematics and Statistics\newline 
Queen's University\newline  Kingston, ON, Canada  K7L 3N8}}\newline 
\texttt{n.fellini@queensu.ca}

\end{document}